\numberwithin{equation}{section}
\newtheorem{theorem}{Theorem}
\newtheorem{lemma}{Lemma}
\newtheorem{proposition}{Proposition}
\numberwithin{theorem}{section} \numberwithin{corollary}{section}
\numberwithin{lemma}{section} \numberwithin{definition}{section}
\numberwithin{proposition}{section} \numberwithin{remark}{section}
\newcommand{\R}{\mathbb R}
\newcommand{\medint}{-\kern  -,375cm\int}
\newcommand{{\etaa}}{\eta_a}
\newcommand{\etaaT}{{\eta_a}_T}
\newcommand{\dvol}{dV}
\newcommand{\Sn}{\mathbb S^n}
\newcommand{\cn}{\omega_{n-1}}
\begin{document}
\title{Some sharp Hardy inequalities on spherically symmetric domains}
\author{Francesco Chiacchio\thanks{
Phone: +39-081\,675678; Fax: +39-081\,7662106; 
E-mail: \texttt{francesco.chiacchio@unina.it}.
Partially supported by GNAMPA-INdAM, Progetto \textquotedblleft
Propriet\`{a} analitico geometriche di soluzioni di equazioni ellittiche e
paraboliche \textquotedblright.} \ and Tonia Ricciardi\thanks{
Corresponding author. Phone: +39-081\,675663; Fax: +39-081\,675665;
E-mail: \texttt{tonia.ricciardi@unina.it};
Homepage: http://wpage.unina.it/tonricci.
Partially supported by Regione Campania L.R.~5/2006, by GNAMPA-INdAM, and by
a PRIN-COFIN project of the Italian Ministry of University and Research.} \\
{\small {Dipartimento di Matematica e Applicazioni ``R. Caccioppoli"}}\\
{\small {Universit\`a di Napoli Federico II, Via Cintia, 80126 Napoli, Italy.
}}
}
\date{}
\maketitle
\abstract{We prove some sharp Hardy inequalities for domains with a spherical symmetry.
In particular, we prove an inequality for domains of the unit $n$-dimensional sphere
with a point singularity, and an inequality
for functions defined on the half-space $\R_+^{n+1}$} vanishing on the hyperplane
$\{x_{n+1}=0\}$,
with singularity along the $x_{n+1}$-axis.
The proofs rely on a one-dimensional Hardy inequality
involving a weight function related to the volume element on the sphere,
as well as on symmetrization arguments.
The one-dimensional inequality is derived in a general form.
\begin{description}
\item {\textsc{Key Words:}} 
sharp weighted Hardy inequalities, symmetrization.
\item {\textsc{MSC 2000 Subject Classification:}} 46E35 (26D10, 35J25).
\end{description}
\section{Introduction and main results}
\label{sec:intro}
Sharp Hardy inequalities have attracted a considerable attention in recent years,
particularly in view of their applications to differential equations motivated by
physics and geometry.
Let $1<p<n$.
The classical Hardy inequality states that
\begin{equation}
\label{classicalHardy}
\int_{\mathbb{R}^{n}}\left\vert Du\right\vert_n^{p}\geq \left( \frac{n-p}{p}
\right) ^{p}\int_{\mathbb{R}^{n}}\frac{|u|^{p}}{|x|_n^p}
\end{equation}
for all smooth functions $u$ compactly supported on $\R^n$,
where we set $|x|_n^2=x_1^2+\ldots+x_n^2$ for all $x\in\R^n$.
A considerable effort has been devoted to extending this inequality to
manifolds, to special weight functions as well as to
domains exhibiting particular symmetries.
See, e.g., \cite{KP} for an extensive review.
Our aim in this note is to derive some sharp Hardy type inequalities
specifically tailored for manifolds with a spherical symmetry.
It should be mentioned that several recent inequalities
have been concerned with the special case of
the sphere, see, e.g., \cite{BPS,BP}. Indeed,
certain specific phenomena which do not occur on Euclidean space
actually do occur on spheres.
For example, in \cite{BPS} it was shown that the Sobolev inequality
admits minimizers on sufficiently large spherical caps.
\par
In order to state our main results we introduce some notation.
Let
\[
\mathbb S^n= \{x=(x_1,...,x_{n+1})\in\mathbb{R}^{n+1} : |x|=1 \}
\]
denote  the unit $n$-sphere, where we set
$|x|^2=|x|_{n+1}^2=x_1^2+\ldots+x_{n+1}^2$ for all $x\in\R^{n+1}$.
For $1<p<n$ and for $a>0$ we define the following weight function 
$\widetilde{\eta}_{a}:(0,a)\to\R$,
which is related to the volume element on $\mathbb S^n$,
as follows:
\begin{equation}
\label{tildeetaa}
\widetilde{\eta}_{a}(t)=\frac{(\sin t)^{-\frac{n-1}{p-1}}}
{\int_t^{a}(\sin s)^{-\frac{n-1}{p-1}}\,ds}.
\end{equation}
Note that 
$\lim_{t\to0^+}\widetilde{\eta}_{a}(t)
=\lim_{t\to a^-}\widetilde{\eta}_{a}(t)=+\infty$,
see \eqref{etaestimate} below.
In Lemma~\ref{phi_log_conc} 
we will show that  there exists $\widetilde T\in(0,a)$
such that $\widetilde{\eta}_{a}$ decreases in $(0,\widetilde T)$
and increases in $(\widetilde T,a)$.
Therefore, the following truncated function:
\begin{equation}
\label{tildeetaaTT}
\widetilde{\eta}_{aT}(t)=
\begin{cases}
\frac{(\sin t)^{-\frac{n-1}{p-1}}}
{\int_t^{a}(\sin s)^{-\frac{n-1}{p-1}}\,ds},
&\mathrm{if\ }t\in(0,\widetilde T)\\
\widetilde{\eta}_{a}(\widetilde T),
&\mathrm{if\ }x\in[\widetilde T,a)
\end{cases}
\end{equation}
is decreasing in $(0,a)$.
We denote by
$\Theta=(\theta_1,\ldots,\theta_{n-1},\theta_n)$ the angular variables on
$\mathbb S^n$ and to simplify notation we set $\theta=\theta_n$.
The angle $\theta\in [0,\pi]$, satisfying
$x_{n+1}=|x|\cos\theta$,
will be the only relevant angular variable
to our purposes.
We denote by $g$ the standard metric on $\mathbb S^n$ and by
$\dvol$ the volume element on $\mathbb S^n$.
For $\alpha\in(0,\pi]$ we denote by $\mathcal{B}(\alpha)$ the geodesic ball (spherical cap)
on $\mathbb{S}^n$ with radius $\alpha$
centered at the ``north pole'' $N=(0,\ldots,0,1)\in\R^{n+1}$. Namely,
we define
\begin{equation*}
\mathcal{B}(\alpha)=\left\{x\in \mathbb{S}^{n}:\ 0\leq
\theta<\alpha \right\}.
\end{equation*}
Let $N\in\Omega\subset\Sn$  be an open set such that $|\Omega|<|\mathbb S^n|$
and let $a^\star\in(0,\pi)$ be such that
$|\mathcal{B}(a^\star)|=|\Omega|$.
Here, for every measurable set $E\subset\Sn$,
$|E|$  denotes the volume of $E$ with respect to the standard Lebesgue measure
induced by $g$ on $\Sn$.
In turn, we define the following weight function
$\rho_{a^\star}:\mathbb S^n\setminus\{N\}\to\R$
\begin{equation}
\rho_{a^\star}(x)=
\begin{cases}
\frac{p-1}{n-p}\widetilde{\eta}_{a^\star T}(\theta)
&\mathrm{if\ }x\in \mathcal B(a^\star)\setminus\{N\}\\
\frac{p-1}{n-p}\widetilde{\eta}_{a^\star T}(\widetilde T)
&\mathrm{if\ }x\in\mathbb S^n\setminus\mathcal B(a^\star)
\end{cases},
\label{rho*}
\end{equation}
where $\widetilde{\eta}_{a^\star T}$ is the weight function
defined in \eqref{tildeetaaTT} with $a=a^\star$.
With this notation, our first result is the following.
\begin{theorem}
\label{thm:sphere}
Let $n\geq2$ and $1<p<n$. Let
$\Omega \subset \mathbb{S}^{n}$ be an open
set such that $N\in \Omega $ and
$|\Omega|<|\mathbb{S}^{n}|$.
Let $a^\star$ be such that $|\Omega|=|\mathcal B(a^\star)|$.
Then, for every $u\in W_{0}^{1,p}(\Omega)$, we have
\begin{equation}
\label{Hsphere}
\int_{\Omega }\left\vert \nabla u\right\vert ^{p}\,\dvol\geq \left(
\frac{n-p}{p}\right) ^{p}\int_{\Omega }\left\vert u\right\vert ^{p}\rho_{a^\star}^{p}\,\dvol.
\end{equation}
The constant $[(n-p)/p]^{p}$ is sharp.
\end{theorem}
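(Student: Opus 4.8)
The plan is to reduce \eqref{Hsphere} to a one-dimensional weighted Hardy inequality in the angular variable $\theta$ by a two-step symmetrization, to invoke the general one-dimensional inequality established below, and finally to obtain sharpness by localizing near the singularity $N$. We may assume $u\ge0$, and by density it is enough to treat $u\in C_c^\infty(\Omega)$ (once \eqref{Hsphere} is known on $C_c^\infty(\Omega)$ it controls the weighted $L^p$-norm by the Dirichlet norm, and a standard approximation argument extends it to all of $W_0^{1,p}(\Omega)$). Extend $u$ by zero to $\Sn$ and let $u^\star$ be its decreasing rearrangement about $N$; since $|\{u>0\}|\le|\Omega|=|\mathcal B(a^\star)|$, the function $u^\star$ is radially symmetric, nonincreasing in $\theta$, and belongs to $W_0^{1,p}(\mathcal B(a^\star))$. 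The P\'olya--Szeg\H{o} inequality on $\Sn$ (a consequence of the spherical isoperimetric inequality) gives $\int_\Omega|\nabla u|^p\,\dvol\ge\int_{\mathcal B(a^\star)}|\nabla u^\star|^p\,\dvol$. On the other hand, by Lemma~\ref{phi_log_conc} and the construction \eqref{tildeetaaTT}, $\widetilde{\eta}_{a^\star T}$ is nonincreasing on $(0,a^\star)$, so by \eqref{rho*} the weight $\rho_{a^\star}$ is radially symmetric and nonincreasing on $\Sn$ (strictly decreasing for $0\le\theta<\widetilde T$ and constant for $\widetilde T\le\theta\le\pi$); hence $\rho_{a^\star}^p$ equals its own decreasing rearrangement, and the Hardy--Littlewood inequality applied to $u^p$ and $\rho_{a^\star}^p$ yields $\int_\Omega u^p\rho_{a^\star}^p\,\dvol\le\int_{\mathcal B(a^\star)}(u^\star)^p\rho_{a^\star}^p\,\dvol$. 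Combining these two displays, it suffices to prove \eqref{Hsphere} for a radially nonincreasing $v=u^\star\in W_0^{1,p}(\mathcal B(a^\star))$; and since $\rho_{a^\star}=\frac{p-1}{n-p}\widetilde{\eta}_{a^\star T}\le\frac{p-1}{n-p}\widetilde{\eta}_{a^\star}$ on $\mathcal B(a^\star)$ by \eqref{tildeetaa}--\eqref{tildeetaaTT}, it is enough to prove the inequality with the larger weight $\frac{p-1}{n-p}\widetilde{\eta}_{a^\star}$ in place of $\rho_{a^\star}$.

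For a radial function $v=v(\theta)$ one has $|\nabla v|=|v'(\theta)|$ and $\dvol=\cn(\sin\theta)^{n-1}\,d\theta$, so, using $\bigl(\tfrac{n-p}{p}\bigr)^p\bigl(\tfrac{p-1}{n-p}\bigr)^p=\bigl(\tfrac{p-1}{p}\bigr)^p$, the reduced inequality becomes
\[
\int_0^{a^\star}|v'(\theta)|^p(\sin\theta)^{n-1}\,d\theta\ \ge\ \Bigl(\frac{p-1}{p}\Bigr)^p\int_0^{a^\star}|v(\theta)|^p\,\widetilde{\eta}_{a^\star}(\theta)^p(\sin\theta)^{n-1}\,d\theta .
\]
Writing $W(\theta)=(\sin\theta)^{n-1}$, relation \eqref{tildeetaa} says precisely that $\widetilde{\eta}_{a^\star}(\theta)=W(\theta)^{-1/(p-1)}\big/\int_\theta^{a^\star}W(s)^{-1/(p-1)}\,ds$, so this is exactly the general one-dimensional weighted Hardy inequality established below, specialized to the weight $W$ on $(0,a^\star)$: the integral $\int_t^{a^\star}W(s)^{-1/(p-1)}\,ds$ is finite for every $t\in(0,a^\star)$, $v$ is bounded (being the rearrangement of a bounded function), and $v\in W_0^{1,p}(\mathcal B(a^\star))$ supplies the boundary condition at $\theta=a^\star$. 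This proves \eqref{Hsphere}.

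It remains to show that $[(n-p)/p]^p$ cannot be enlarged. Since $\Omega$ is open and $N\in\Omega$, fix $r_0>0$ with $\mathcal B(2r_0)\subset\Omega$ and $2r_0<\widetilde T$, so that $\rho_{a^\star}(\theta)=\frac{p-1}{n-p}\widetilde{\eta}_{a^\star}(\theta)$ on $\mathcal B(2r_0)$. Set $g(\theta)=\int_\theta^{a^\star}(\sin s)^{-\frac{n-1}{p-1}}\,ds$; a direct computation shows that the formal extremal $v=g^{(p-1)/p}$ satisfies the pointwise identity $|v'(\theta)|^p(\sin\theta)^{n-1}=\bigl(\tfrac{p-1}{p}\bigr)^p|v(\theta)|^p\,\widetilde{\eta}_{a^\star}(\theta)^p(\sin\theta)^{n-1}$. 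Regularize it as $v_k(\theta)=\chi(\theta)\min\{g(\theta)^{(p-1)/p},k\}$, where $\chi$ is a fixed smooth cutoff with $\chi\equiv1$ on $\mathcal B(r_0)$ and $\operatorname{supp}\chi\subset\mathcal B(2r_0)$; then $v_k\in W_0^{1,p}(\Omega)$. Using $g(\theta)\to+\infty$ as $\theta\to0^+$ (see \eqref{etaestimate}) together with the elementary identities $\int(-g')g^{-p}\,d\theta=\tfrac1{p-1}g^{1-p}$ and $\int(-g')g^{-1}\,d\theta=-\ln g$, one checks that the contributions to both sides of \eqref{Hsphere} coming from the level-$k$ truncation near $N$ and from the cutoff $\chi$ near $\partial\mathcal B(r_0)$ stay bounded as $k\to+\infty$, whereas the ``bulk'' contributions grow like $\ln g(\theta_k)$, where $g(\theta_k)=k^{p/(p-1)}$, and hence diverge. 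Therefore
\[
\lim_{k\to\infty}\frac{\int_\Omega|\nabla v_k|^p\,\dvol}{\int_\Omega|v_k|^p\rho_{a^\star}^p\,\dvol}=\Bigl(\frac{n-p}{p}\Bigr)^p ,
\]
which proves the sharpness. (Alternatively, by \eqref{etaestimate} one has $\widetilde{\eta}_{a^\star}(\theta)\sim1/\theta$ as $\theta\to0^+$, so $\rho_{a^\star}(\theta)\sim1/\theta$; since $\Sn$ is asymptotically Euclidean near $N$, inequality \eqref{Hsphere} reduces there, up to lower-order terms, to the classical Hardy inequality \eqref{classicalHardy} on $\R^n$, whose constant is sharp already for functions supported in a small ball.)

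The main obstacle is the symmetrization step. For the Hardy--Littlewood inequality to move the weighted term in the favorable direction, $\rho_{a^\star}$ must be a \emph{genuine} radially nonincreasing function on all of $\Sn$; this is precisely why the \emph{truncated} weight $\widetilde{\eta}_{aT}$, and not $\widetilde{\eta}_a$ itself (which blows up at both endpoints of $(0,a)$), enters the statement \eqref{rho*}, and it rests on the monotonicity provided by Lemma~\ref{phi_log_conc}. A secondary technical point is to verify that the truncation and cutoff defects in the sharpness construction are of lower order than the logarithmically divergent bulk terms.
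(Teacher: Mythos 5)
Your proposal is correct, and its reduction of \eqref{Hsphere} to a one-dimensional inequality coincides with the paper's: P\'olya--Szeg\H{o} (Lemma~\ref{lem:PSz}) for the gradient term and Hardy--Littlewood (Lemma~\ref{lem:HL}) for the weighted term, using, as you rightly stress, that $\rho_{a^\star}$ is radially nonincreasing on all of $\Sn$ --- which is exactly what the truncation in \eqref{tildeetaaTT}--\eqref{rho*} is for. You then deviate in two places. First, for the inequality itself you drop the truncation via $\widetilde\eta_{a^\star T}\le\widetilde\eta_{a^\star}$ and invoke the untruncated one-dimensional inequality (Proposition~\ref{prop:hardy}), whereas the paper applies Proposition~\ref{One_dim_Hardy_Tr} directly; both are legitimate, and your shortcut makes explicit that the truncated statement is only needed for sharpness, not for the inequality. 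Second, and more substantially, your sharpness argument is different: instead of the paper's sequence $\widetilde V_k$ of \eqref{V_hat_k}, which is built from the quasi-extremals of Proposition~\ref{One_dim_Hardy_Tr} and is supported on the fixed cap $\mathcal{B}\bigl((a^\star+\widetilde T)/2\bigr)$, you take truncated and cut-off copies of the formal extremal $g^{(p-1)/p}$, $g(\theta)=\int_\theta^{a^\star}(\sin s)^{-\frac{n-1}{p-1}}\,ds$, concentrated in a small cap $\mathcal{B}(2r_0)\subset\Omega$ with $2r_0<\widetilde T$, and use the logarithmic divergence of both sides; your bookkeeping (the truncation term tending to $\tfrac{1}{p-1}$, the bounded cutoff term, the bulk term $\sim\ln g(\theta_k)$) is correct and reproduces locally the paper's $A_k$, $B_k$ analysis from \eqref{A_k}--\eqref{B_k}. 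What your route buys: the quasi-extremals belong to $W_0^{1,p}(\Omega)$ for an \emph{arbitrary} open $\Omega\ni N$, because they are supported near $N$ where $\rho_{a^\star}=\tfrac{p-1}{n-p}\widetilde\eta_{a^\star}$, whereas the paper's $\widetilde V_k$ are admissible as written only when $\Omega$ contains the fixed cap (e.g.\ $\Omega=\mathcal{B}(a^\star)$), so for general $\Omega$ the paper's sharpness step tacitly requires a localization of exactly the kind you perform. What the paper's route buys in exchange is the sharpness of the truncated one-dimensional inequality (Proposition~\ref{One_dim_Hardy_Tr}) as a statement of independent interest, which your localized construction bypasses.
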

Note that, since $\theta = d_g(x,N)$, we have 
\begin{equation}
\label{rhoasympt}
\lim_{x \to N} d_g(x,N)  \rho_{a^\star}(x)=1,
\end{equation}
see \eqref{rho_asym} below,
so that $\rho_{a^\star}$ is a natural extension of the classical singularity
$|x|_n^{-p}$ appearing in \eqref{classicalHardy}.
\par
Theorem~\ref{thm:sphere}, together with a Steiner symmetrization with respect
to the angular variables,
yields an inequality for functions defined on the half-space
$\R^{n+1}_{+}=\{x\in\R^{n+1}:\ x_{n+1}>0\}$
with singularity along the $x_{n+1}$-axis.
More precisely, for every $x\in\R^{n+1}$ let $x'=(x_1,\ldots,x_n,0)$.
We take $a^\star=\pi/2$ in \eqref{tildeetaa}--\eqref{rho*} and we define
the singularity $\zeta:\R_+^{n+1}\setminus\{x'=0\}\to\R$ as follows:
\[
\zeta(x)=\rho_{\frac{\pi}{2}}\left(\frac{x}{|x|}\right).
\]
Note that $\zeta$ is singular on the $x_{n+1}$-axis.
We have:
\begin{theorem}
\label{thm:Fubini} For every $u\in W_{0}^{1,p}(\mathbb{R}_{+}^{n+1})$, the
following inequality holds:
\begin{equation}
\int_{\mathbb{R}_{+}^{n+1}}\left\vert D_{\Theta }u\right\vert ^{p}\,dx
\geq
\left( \frac{n-p}{p}\right) ^{p}\int_{\mathbb{R}_{+}^{n+1}}\left\vert
u\right\vert ^{p}\frac{\zeta^{p}(x)}{|x|^{p}}\,dx.  
\label{Est_on_Rn_+}
\end{equation}
Here, $D_{\Theta}u(x)$ denotes the projection of the gradient $Du(x)$ on the
sphere $\partial B(0,|x|)$. The constant $\left( \frac{n-p}{p}\right)^{p}$
is sharp.
\end{theorem}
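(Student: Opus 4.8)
The plan is to decompose $\R_{+}^{n+1}$ into the rays through the origin and to apply Theorem~\ref{thm:sphere} on each sphere $\partial B(0,r)$ separately. Write $x=r\omega$ with $r=|x|\in(0,\infty)$ and $\omega=x/|x|$. Since $u$ vanishes on $\{x_{n+1}=0\}$, the point $\omega$ ranges over the open hemisphere $\{\omega\in\mathbb S^n:\theta<\pi/2\}$, which is exactly the geodesic ball $\mathcal B(\pi/2)$; moreover $|\mathcal B(\pi/2)|=\tfrac12|\mathbb S^n|<|\mathbb S^n|$ and $N\in\mathcal B(\pi/2)$, so $\Omega=\mathcal B(\pi/2)$ is admissible in Theorem~\ref{thm:sphere}, with $a^\star=\pi/2$ forced by $|\mathcal B(a^\star)|=|\Omega|$ (no rearrangement of the domain is needed, since $\mathcal B(\pi/2)$ is already a spherical cap). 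Setting $u(x)=U(r,\omega)$, the orthogonal splitting of $Du(x)$ into its radial part $\partial_r U$ and its part tangent to $\partial B(0,r)$ --- which is $D_\Theta u(x)$ --- together with the fact that $\partial B(0,r)$, parametrized by $\omega\mapsto r\omega$, carries the metric $r^{2}g$, gives $|D_\Theta u(x)|^2=r^{-2}\,|\nabla U(r,\cdot)|^2$, where $\nabla$ on the right denotes the gradient with respect to the round metric $g$ on $\mathbb S^n$. Since $dx=r^n\,dr\,\dvol(\omega)$, this yields
\[
\int_{\R_{+}^{n+1}}|D_\Theta u|^{p}\,dx=\int_{0}^{\infty}r^{n-p}\left(\int_{\mathcal B(\pi/2)}|\nabla U(r,\cdot)|^{p}\,\dvol\right)dr .
\]

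Next I would check that $U(r,\cdot)\in W^{1,p}_{0}(\mathcal B(\pi/2))$ for a.e.\ $r>0$: approximate $u$ in $W^{1,p}$ by functions in $C^{\infty}_{c}(\R_{+}^{n+1})$, whose slices are smooth and supported away from the equator $\partial\mathcal B(\pi/2)$, and pass to a subsequence along which the slices converge in $W^{1,p}(\mathcal B(\pi/2))$ for a.e.\ $r$; this exhibits $U(r,\cdot)$ as a $W^{1,p}_{0}$-limit on $\mathcal B(\pi/2)$ (membership in $W^{1,p}_{0}(\mathcal B(\pi/2))$ only encodes vanishing on $\partial\mathcal B(\pi/2)$ and imposes no condition at the interior point $N$). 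Applying \eqref{Hsphere} with $\Omega=\mathcal B(\pi/2)$ and $a^\star=\pi/2$ to each slice, multiplying by $r^{n-p}$ and integrating in $r$, I get
\[
\int_{\R_{+}^{n+1}}|D_\Theta u|^{p}\,dx\ge\left(\frac{n-p}{p}\right)^{p}\int_{0}^{\infty}r^{n-p}\left(\int_{\mathcal B(\pi/2)}|U(r,\omega)|^{p}\,\rho_{\frac{\pi}{2}}^{p}(\omega)\,\dvol(\omega)\right)dr .
\]
Reverting to Cartesian coordinates via $\rho_{\frac{\pi}{2}}(\omega)=\rho_{\frac{\pi}{2}}(x/|x|)=\zeta(x)$, $|U(r,\omega)|^{p}=|u(x)|^{p}$, and $r^{n-p}\,dr\,\dvol(\omega)=r^{-p}\bigl(r^{n}\,dr\,\dvol(\omega)\bigr)=|x|^{-p}\,dx$ turns the right-hand side into $\bigl(\tfrac{n-p}{p}\bigr)^{p}\int_{\R_{+}^{n+1}}|u|^{p}\,\zeta^{p}(x)/|x|^{p}\,dx$, which is \eqref{Est_on_Rn_+}. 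Alternatively, one can prove this chain first for $u\in C^{\infty}_{c}(\R_{+}^{n+1})$, where all slices are manifestly admissible and $\zeta^{p}/|x|^{p}$ is bounded on the support of $u$, and then pass to general $u\in W^{1,p}_{0}(\R_{+}^{n+1})$ by density together with Fatou's lemma on the right-hand side.

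To see that $\bigl(\tfrac{n-p}{p}\bigr)^{p}$ is sharp, let $(v_{k})\subset W^{1,p}_{0}(\mathcal B(\pi/2))$ be a minimizing sequence for the quotient in Theorem~\ref{thm:sphere} (whose infimum equals $[(n-p)/p]^{p}$ by the sharpness assertion there), fix $f\in C^{\infty}_{c}((0,\infty))$ with $f\not\equiv0$, and set $u_{k}(x)=f(|x|)\,v_{k}(x/|x|)$. Then $u_{k}\in W^{1,p}_{0}(\R_{+}^{n+1})$, and by the identities above the radial factor $\int_{0}^{\infty}|f|^{p}r^{n-p}\,dr$ cancels between numerator and denominator, so that
\[
\frac{\int_{\R_{+}^{n+1}}|D_\Theta u_{k}|^{p}\,dx}{\int_{\R_{+}^{n+1}}|u_{k}|^{p}\,\zeta^{p}(x)/|x|^{p}\,dx}
=\frac{\int_{\mathcal B(\pi/2)}|\nabla v_{k}|^{p}\,\dvol}{\int_{\mathcal B(\pi/2)}|v_{k}|^{p}\,\rho_{\frac{\pi}{2}}^{p}\,\dvol}\longrightarrow\left(\frac{n-p}{p}\right)^{p} ,
\]
and hence the constant in \eqref{Est_on_Rn_+} cannot be increased.

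The only genuinely delicate point is the slicing step: justifying rigorously that Theorem~\ref{thm:sphere} may be applied to the slice $U(r,\cdot)$ for a.e.\ $r$ --- i.e.\ the Fubini-plus-approximation argument ensuring $U(r,\cdot)\in W^{1,p}_{0}(\mathcal B(\pi/2))$, with vanishing on the equator $\partial\mathcal B(\pi/2)$ but nothing imposed at the removed pole $N$. The remaining ingredients are the passage to polar coordinates, the metric-scaling identity for $D_\Theta u$, and a plain change of variables.
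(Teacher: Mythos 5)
Your argument is correct, but it follows a genuinely different route from the paper. The paper does not slice the original function: it first performs a Steiner symmetrization of $u$ in the angular variables (Fubini plus the P\'olya--Szeg\H{o} principle, Lemma~\ref{lem:PSz}, applied on each sphere $\mathbb S^n_r$), applies the spherical inequality to the symmetrized, $\theta$-dependent slices $u^\sharp(r,\cdot)$, and then returns to $u$ on the right-hand side via the Hardy--Littlewood inequality (Lemma~\ref{lem:HL}), using that $\zeta$ is a nonincreasing function of $\theta$ so that it coincides with its own rearrangement; sharpness is obtained with a product sequence $\Theta_k(\theta)R_k(r)$ where $R_k^p\overset{\ast}{\rightharpoonup}\delta_1$ concentrates the radial factor. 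You instead treat Theorem~\ref{thm:sphere} as a black box that already contains the symmetrization: since \eqref{Hsphere} holds for arbitrary (non-symmetric) elements of $W_0^{1,p}(\mathcal B(\pi/2))$, you apply it directly to a.e.\ slice $U(r,\cdot)$ and integrate in $r$, with no rearrangement or Hardy--Littlewood step at the half-space level, and your sharpness argument cancels the radial factor exactly by separation of variables rather than concentrating it. What each approach buys: yours is more economical and makes the logical dependence on Theorem~\ref{thm:sphere} transparent, at the price of the one genuinely delicate point you correctly isolate --- justifying that a.e.\ slice of a general $u\in W_0^{1,p}(\R_+^{n+1})$ lies in $W_0^{1,p}(\mathcal B(\pi/2))$, which you handle by approximation (or sidestep via the smooth-functions-plus-Fatou variant, which is on a par with the paper's own reduction ``by density'' to $C_0^1$); the paper's symmetrization route keeps everything at the level of smooth functions of $\theta$ and of the explicit test sequence $\widetilde V_k$, so it never has to discuss slice membership for general Sobolev functions, but it re-runs the rearrangement machinery that is already embedded in Theorem~\ref{thm:sphere}.
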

We observe that for special values of $p$ and $n$ the singularities appearing
in \eqref{Hsphere} and \eqref{Est_on_Rn_+} take particularly simple and explicit forms. 
More precisely, let $p=(n+1)/2$ and
suppose that $\Omega\subset\mathbb S^n$ is such that $|\Omega|=|\mathbb S^n|/2$.
Then, $a^{\star }=\pi /2$, $(p-1)/(n-p)=1$, $(n-1)/(p-1)=2$
and therefore
$\widetilde{\eta}_{\pi/2}(t)
=\sin^{-2}t(\int_t^{\pi/2}\sin^{-2}\sigma\,d\sigma)^{-1}=(\sin t\cos t)^{-1}$.
Consequently, $\widetilde{T}=\pi/4$ and
\[
\rho_{\pi/2}(x)=
\begin{cases}
(\sin\theta\cos\theta )^{-1}&\mathrm{if\ }x\in\mathcal{B}(\pi/4)\setminus\{N\}\\
2&\mathrm{if\ }x\in\mathbb S^n\setminus\mathcal B(\pi/4)
\end{cases}.
\]
Note also that $(\sin\theta\cos\theta )^{-1}>\theta^{-1}$ for any
$\theta\in\left(0,\pi/4\right)$ and $2>\theta^{-1}$ for any
$\theta\in\left(\pi/4,\pi\right).$ Therefore,
inequality~\eqref{Hsphere} implies that
\begin{equation*}
\int_{\Omega}\left\vert\nabla u\right\vert^{p}\,dV
\geq\left(\frac{n-p}{p}\right)^{p}\int_{\Omega}\left\vert u\right\vert^{p}
\left[\frac{1}{d_g(x,N)^{p}}+h\right]\,dV,
\end{equation*}
where $h$ is a positive quantity, thus showing that 
\eqref{classicalHardy} is improved on the sphere in this case. 
When $p=(n+1)/2$ the same arguments also yield a simple form for 
\eqref{Est_on_Rn_+}. Indeed, in this case \eqref{Est_on_Rn_+} may be written in the form
\begin{equation*}
\int_{\mathbb{R}_{+}^{n+1}}\left\vert D_{\Theta }u\right\vert^{p}\,dx
\geq\left(\frac{p-1}{p}\right)^{p}\left(\int_{\mathbb{R}_{+}^{n+1}\cap
\left\{0\leq\theta<\pi/4\right\}}\frac{\left\vert u\right\vert^{p}}
{\left\vert x^{\prime }\right\vert ^{p}(\cos \theta )^{p}}\,dx
+2^{p}\int_{\mathbb{R}_{+}^{n+1}\cap\left\{\pi/4\leq\theta
<\pi/2\right\}}\frac{\left\vert u\right\vert^{p}}{\left\vert x\right\vert^{p}}
\,dx\right).
\end{equation*}
This special case was also shown to be of interest in \cite{BPS}.
\par
An outline of the proofs may be as follows.
Our starting point is the following one-dimensional Hardy inequality:
\begin{equation}
\label{IDsinhardy}
\int_0^a|u'|^p\sin^{n-1}(t)\,dt\ge\left(\frac{p-1}{p}\right)^p
\int_0^a|u|^p\widetilde\eta_{aT}^p\sin^{n-1}(t)\,dt,
\end{equation}
for all $u$ such that $u(a)=0$,
where $\widetilde\eta_{aT}$ is the function defined in \eqref{tildeetaaTT}.
In fact, in Section~\ref{sec:1D} we shall prove
some sharp weighted one-dimensional Hardy inequalities involving
a \textit{general} weight $\phi$ which reduces to \eqref{IDsinhardy} when
$\phi(t)=\widetilde\phi(t)=\sin^{n-1}(t)$, see Proposition~\ref{prop:hardy} 
and Proposition~\ref{One_dim_Hardy_Tr} below.
To this end, we extend a method described in \cite{HLP}, see also \cite{Tar},
for the special case $\phi(t)=1$.
In fact, one of our efforts is to determine very general conditions on
$\phi$ such that this method is applicable.
This technique was also employed in \cite{BCT} in the special case
$\phi(t)=(2\pi)^{-1/2}\exp\{-t^2/2\}$
in the context of symmetrization with respect to Gaussian measure.
On the other hand, our sharpness considerations as in Proposition~\ref{One_dim_Hardy_Tr}
are new even in these special cases.
In Section~\ref{sec:wholespace}
we employ spherical symmetrization in order to reduce Theorem~\ref{thm:sphere}
to \eqref{IDsinhardy}.
In turn, Theorem~\ref{thm:sphere} together with 
a Steiner symmetrization 
with respect to the angular variables 
concludes the proof of Theorem~\ref{thm:Fubini}.
\section{Some Hardy inequalities on intervals}
\label{sec:1D}
Our aim in this section is to prove some weighted
one-dimensional Hardy inequalities as stated in Proposition~\ref{prop:hardy} 
and Proposition~\ref{One_dim_Hardy_Tr} below.
To this end, as already mentioned in Section~\ref{sec:intro}, we exploit a technique
from \cite{HLP}, Theorem~253 p.~175, see
also \cite{BCT,Tar}. Let $a>0$, $p>1$ and let
$\phi\in C^{1}\left( 0,a\right] \cap C^{0} ( \left[ 0,a \right] )$
 be such that
\begin{equation}
\label{phibasic}
\phi(0)=0, \qquad\qquad\phi (t)>0\text{ in }\left( 0,a\right], \qquad\qquad
c_1t^{p-1+\delta}\le\phi(t)\le c_2t^{p-1+\delta},
\end{equation}
for some $c_1,c_2,\delta >0$. We denote
\begin{equation*}
W^{1,p}\left( 0,a;\phi \right) =\left\{ u:\left[ 0,a\right] \rightarrow
\mathbb{R};\ u\in L_{\text{loc}}^{1}\left[ 0,a\right]\ \mathrm{\ and\ }
\int_{0}^{a}\left\vert u^{\prime }\right\vert ^{p}\phi\,dt<+\infty \right\},
\end{equation*}
where $u^{\prime}$ denotes the distributional derivative of $u$.
We consider the following subspace of $W^{1,p}\left( 0,a;\phi \right)$
\begin{equation*}
\mathcal{E} = \left\{
u \in W^{1,p}\left( 0,a;\phi \right) : u(a)=0
\right\},
\end{equation*}
endowed with the norm
$\left\Vert u\right\Vert =\left( \int_{0}^{a}\left\vert u^{\prime
}\right\vert ^{p}\phi \right) ^{\frac{1}{p}}$.
We note that if $u \in \mathcal{E}$, then $u$ is absolutely continuous
in $\left[ \epsilon,a\right]$, for all
$\epsilon\in\left(0,a\right)$.
On the other hand, $u$ is in general unbounded near the origin.
Nevertheless, $u$ may be approximated in $\mathcal{E}$ by functions
which vanish in $0$. More precisely we have the following.
\begin{lemma}
\label{lem:approx} $C_{0}^{1}[0,a]$ is dense in $\mathcal{E}$.
\end{lemma}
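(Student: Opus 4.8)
\emph{Plan of proof.} I would prove density through three successive approximations, combined at the end by a diagonal argument: (i) approximate an arbitrary $u\in\mathcal E$ by a bounded element of $\mathcal E$; (ii) approximate a bounded element of $\mathcal E$ by one vanishing near the origin; and (iii) mollify the latter into a function of $C^1_0[0,a]$. Throughout I use that every $u\in\mathcal E$ is absolutely continuous on $[\epsilon,a]$ for each $\epsilon\in(0,a)$, and in particular finite on $(0,a]$.

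For (i), given $u\in\mathcal E$ I would take the truncations $u_M=\min\{M,\max\{-M,u\}\}$, which again lie in $\mathcal E$. By the chain rule $u_M'=u'\,\mathbf{1}_{\{|u|<M\}}$ a.e., so $\|u-u_M\|^p=\int_{\{|u|\ge M\}}|u'|^p\phi$; since $u$ is finite a.e.\ the sets $\{|u|\ge M\}$ decrease to a null set, and since $\int_0^a|u'|^p\phi<\infty$, absolute continuity of the integral forces $\|u-u_M\|\to0$.

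For (ii), let $u\in\mathcal E\cap L^\infty(0,a)$ and, for small $\epsilon>0$, let $\eta_\epsilon$ be the piecewise-linear cutoff equal to $0$ on $[0,\epsilon]$, equal to $1$ on $[2\epsilon,a]$, with $|\eta_\epsilon'|=1/\epsilon$ on $(\epsilon,2\epsilon)$. Then $u_\epsilon:=\eta_\epsilon u\in\mathcal E$ vanishes on $[0,\epsilon]$ and, using $u_\epsilon'=\eta_\epsilon'u+\eta_\epsilon u'$,
\[
\|u-u_\epsilon\|^p\le c_p\Big(\int_0^{2\epsilon}|u'|^p\phi+\epsilon^{-p}\int_\epsilon^{2\epsilon}|u|^p\phi\Big).
\]
The first term tends to $0$; for the second, the bound $\phi(t)\le c_2t^{p-1+\delta}$ of \eqref{phibasic} gives $\int_\epsilon^{2\epsilon}\phi\le c_2(2\epsilon)^{p-1+\delta}\epsilon$, so that $\epsilon^{-p}\int_\epsilon^{2\epsilon}|u|^p\phi\le c_2\,2^{\,p-1+\delta}\|u\|_{L^\infty}^p\,\epsilon^{\delta}\to0$. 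This is the only place where the decay of $\phi$ at the origin, i.e.\ $\delta>0$, enters, and it is essential: without $\phi(0)=0$ the functions vanishing near the origin are not dense in $\mathcal E$.

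For (iii), let $v\in\mathcal E\cap L^\infty(0,a)$ vanish on $[0,\epsilon_0]$, with $v(a)=0$. Since $\phi$ is continuous and strictly positive on $[\epsilon_0/2,a]$, $v$ lies in the unweighted space $W^{1,p}(0,a)$; extending $v$ by zero outside $[\epsilon_0,a]$ and mollifying yields smooth functions, vanishing near $0$, that converge to $v$ in $W^{1,p}(0,a)$, and subtracting $v_k(a)\,\chi$, for a fixed $\chi\in C^\infty[0,a]$ vanishing near $0$ with $\chi(a)=1$, restores the condition at $a$; the resulting $w_k\in C^1_0[0,a]$ converge to $v$ in $W^{1,p}(0,a)$, hence in $\mathcal E$ since $\phi$ is bounded on the relevant compact interval. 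A diagonal argument over (i)--(iii) then proves the lemma. (If $C^1_0[0,a]$ is meant to require vanishing near $a$ as well, an analogous, and easier, cutoff near $a$ inserted between (ii) and (iii) handles it, using $v(a)=0$ and the boundedness of $\phi$ near $a$.) The one point needing care is that (i) must precede (ii): for an unbounded $u$ the term $\epsilon^{-p}\int_\epsilon^{2\epsilon}|u|^p\phi$ is not manifestly small, whereas once $u$ is bounded each of the estimates above is entirely routine.
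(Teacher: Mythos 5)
Your proof is correct and follows essentially the same route as the paper: reduce to bounded elements of $\mathcal{E}$, cut off near the origin using the bound $\phi(t)\le c_2t^{p-1+\delta}$ from \eqref{phibasic} (exactly the estimate that gives the paper its $k^{-\delta}$ and you your $\epsilon^{\delta}$), and finally regularize. The only differences are cosmetic: the paper passes to bounded functions by freezing the value $u(k^{-1})$ on $[0,k^{-1}]$ rather than truncating at height $M$, and it replaces $u$ by a linear function on $[0,k^{-1}]$ rather than multiplying by a piecewise-linear cutoff.
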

\begin{proof}
Let $u \in \mathcal{E}.$ By standard properties of Sobolev spaces we may assume that
$u \in C^{1}[\epsilon,a]$ for all $\epsilon \in (0,a)$.
We first show that $C_{0}^{1}[0,a]$ is dense in
$\mathcal{E} \cap L^\infty(0,a)$.
Let $u\in \mathcal{E} \cap L^{\infty }(0,a)$.
We consider the sequence:
\begin{equation*}
u_{k}(t)=\left\{
\begin{array}{ccc}
u\left( k^{-1}\right)kt & \text{if} & t\in \left[ 0,k^{-1}\right] \\
&  &  \\
u(t) & \text{if} & t\in \left] k^{-1},a\right] ,
\end{array}
\right.
\end{equation*}
where $k\in \mathbb{N}$.
By the elementary inequality
$\left\vert \alpha +\beta \right\vert ^{p}\leq 2^{p-1}\left( \left\vert
\alpha \right\vert ^{p}+\left\vert \beta \right\vert ^{p}\right) $,
 for all $\alpha,$  $\beta$ $\in \mathbb{R}$, we have
\begin{equation}
\int_{0}^{a}\left\vert u^{\prime }-u_{k}^{\prime }\right\vert
^{p}\phi\,dt
=
\int_{0}^{k^{-1}}\left\vert u^{\prime }-u\left(
k^{-1}\right) k\right\vert ^{p}\phi\,dt
\leq
 2^{p-1}\left\{
\int_{0}^{k^{-1}}\left\vert u^{\prime }\right\vert
^{p}\phi\,dt+
\left\vert \left\vert u\right\vert \right\vert _{\infty }^{p}
k^{p}\int_{0}^{k^{-1}}\phi\,dt\right\} .  \label{d_0}
\end{equation}
Since $u^{\prime }\in L^{p}\left( 0,a;\phi \right)$, the absolute
continuity of the Lebesgue integral implies that
\begin{equation}
\int_{0}^{k^{-1}}\left\vert u^{\prime }\right\vert
^{p}\phi\,dt=o(1),\quad \textrm{as} \quad k\rightarrow +\infty.  \label{d_1}
\end{equation}
In view of \eqref{phibasic}, we have
\begin{equation*}
\max_{t\in \left[ 0,k^{-1}\right] }\phi \leq c_2 \max_{t\in \left[
0,k^{-1} \right] }t^{p-1+\delta }=c_2 k^{-p+1-\delta }.
\end{equation*}
Consequently,
\begin{equation}
k^{p}\int_{0}^{k^{-1}}\phi \,dt
\leq
k^{p-1}\max_{t\in \left[
0,k^{-1}\right] }\phi
\leq
 c_2 k^{-\delta }=o(1), \quad \textrm{as} \quad k\rightarrow +\infty.  \label{d_3}
\end{equation}
In view of (\ref{d_0})--(\ref{d_1})--(\ref{d_3}) it follows that
\begin{equation*}
\int_{0}^{a}\left\vert u^{\prime }-u_{k}^{\prime }\right\vert^{p}\phi\,dt=o(1),
\quad \textrm{as} \quad k\rightarrow +\infty.
\end{equation*}
We conclude by a standard regularization argument.
Now suppose that $u\in \mathcal{E}$. The
following sequence of bounded functions
\begin{equation*}
\widehat{u}_{k}(t)=\left\{
\begin{array}{ccc}
u\left( k^{-1}\right) & \text{if} & t\in \left[ 0,k^{-1}\right] \\
&&  \\
u(t) & \text{if} & t\in \left] k^{-1},a\right] ,
\end{array}
\right.
\end{equation*}
satisfies
\begin{equation*}
\int_{0}^{a}\left\vert u^{\prime }-\widehat{u}_{k}^{\prime
}\right\vert ^{p}\phi\,dt=\int_{0}^{\frac{1}{k}}\left\vert
u^{\prime }\right\vert ^{p}\phi \,dt=o(1),  \quad \textrm{as} \quad k\rightarrow +\infty.
\end{equation*}
Hence, we are reduced to the case where $u$ is bounded, and the claim is established.
\end{proof}
Fix $a>0$. Let
\begin{equation}
\etaa (t)=\frac{\phi (t)^{-\frac{1}{p-1}}} {\int_{t}^{a}\phi(\sigma )^{-\frac{
1}{p-1}}\,d\sigma },\text{ }t\in \left( 0,a\right).  \label{eta}
\end{equation}
We note that $\etaa>0$ and furthermore the following holds.
\begin{lemma}
\label{lem:eta}
The function $\etaa$ defined by \eqref{eta} satisfies:
\begin{equation}
\label{etaestimate}
\left(\frac{c_1}{c_2}\right)^{\frac{1}{p-1}}\frac{\delta}{p-1}\, \frac{
a^{\delta/(p-1)}}{t\,(a^{\delta/(p-1)}-t^{\delta/(p-1)})} \le\etaa(t)
\le\left(\frac{c_2}{c_1}\right)^{\frac{1}{p-1}}\frac{\delta}{p-1}\, \frac{
a^{\delta/(p-1)}}{t\,(a^{\delta/(p-1)}-t^{\delta/(p-1)})}
\end{equation}
for all $t\in(0,a)$, where $c_1,c_2,\delta>0$ are the constants defined in
\eqref{phibasic}.
\end{lemma}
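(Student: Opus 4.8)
The plan is to insert the two-sided bound $c_1 t^{p-1+\delta}\le \phi(t)\le c_2 t^{p-1+\delta}$ from \eqref{phibasic} into the explicit formula \eqref{eta} for $\eta_a$, estimating the numerator $\phi(t)^{-1/(p-1)}$ and the denominator $\int_t^a \phi(\sigma)^{-1/(p-1)}\,d\sigma$ separately. Since the map $s\mapsto s^{-1/(p-1)}$ is decreasing on $(0,\infty)$, the inequality $c_1\sigma^{p-1+\delta}\le\phi(\sigma)\le c_2\sigma^{p-1+\delta}$ yields
\[
c_2^{-\frac1{p-1}}\sigma^{-1-\frac{\delta}{p-1}}
\le \phi(\sigma)^{-\frac1{p-1}}
\le c_1^{-\frac1{p-1}}\sigma^{-1-\frac{\delta}{p-1}}
\]
for all $\sigma\in(0,a]$. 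Applying this at $\sigma=t$ handles the numerator, and integrating it over $(t,a)$ handles the denominator.

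The key computation is the elementary integral
\[
\int_t^a \sigma^{-1-\frac{\delta}{p-1}}\,d\sigma
=\frac{p-1}{\delta}\left(t^{-\frac{\delta}{p-1}}-a^{-\frac{\delta}{p-1}}\right)
=\frac{p-1}{\delta}\,\frac{a^{\frac{\delta}{p-1}}-t^{\frac{\delta}{p-1}}}{t^{\frac{\delta}{p-1}}a^{\frac{\delta}{p-1}}},
\]
valid since $\delta/(p-1)>0$. Combining the bound on the numerator $\phi(t)^{-1/(p-1)}$ with the bound on the denominator $\int_t^a\phi(\sigma)^{-1/(p-1)}\,d\sigma$ — using the upper bound for the numerator together with the lower bound for the denominator to get the upper estimate on $\eta_a$, and vice versa for the lower estimate — the powers $t^{-1-\delta/(p-1)}$ in numerator and the $t^{-\delta/(p-1)}$-type factors in the denominator combine so that
\[
\eta_a(t)
\le c_1^{-\frac1{p-1}}t^{-1-\frac{\delta}{p-1}}
\left(c_2^{-\frac1{p-1}}\frac{p-1}{\delta}\,\frac{a^{\frac{\delta}{p-1}}-t^{\frac{\delta}{p-1}}}{t^{\frac{\delta}{p-1}}a^{\frac{\delta}{p-1}}}\right)^{-1}
=\left(\frac{c_2}{c_1}\right)^{\frac1{p-1}}\frac{\delta}{p-1}\,\frac{a^{\frac{\delta}{p-1}}}{t\,(a^{\frac{\delta}{p-1}}-t^{\frac{\delta}{p-1}})},
\]
and symmetrically for the lower bound with $(c_1/c_2)^{1/(p-1)}$. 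This is exactly \eqref{etaestimate}.

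There is no serious obstacle here; the only point requiring a word of care is the integrability of $\phi(\sigma)^{-1/(p-1)}$ near $\sigma=t>0$ and up to $\sigma=a$, which is immediate since $\phi$ is continuous and strictly positive on $[t,a]$, so the integral defining $\eta_a$ in \eqref{eta} is finite and positive and the manipulations above are justified. I would also note in passing that the displayed estimate makes the claim $\lim_{t\to0^+}\eta_a(t)=\lim_{t\to a^-}\eta_a(t)=+\infty$ transparent: as $t\to0^+$ the factor $1/t$ blows up, and as $t\to a^-$ the factor $a^{\delta/(p-1)}-t^{\delta/(p-1)}\to0$ in the denominator forces $\eta_a(t)\to+\infty$.
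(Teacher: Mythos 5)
Your proposal is correct and follows essentially the same route as the paper: both arguments bound the numerator $\phi(t)^{-1/(p-1)}$ and the denominator $\int_t^a\phi(\sigma)^{-1/(p-1)}\,d\sigma$ via \eqref{phibasic}, evaluate the elementary integral $\int_t^a\sigma^{-1-\delta/(p-1)}\,d\sigma=\frac{p-1}{\delta}\bigl(t^{-\delta/(p-1)}-a^{-\delta/(p-1)}\bigr)$, and combine to obtain the factors $(c_1/c_2)^{1/(p-1)}$ and $(c_2/c_1)^{1/(p-1)}$ in \eqref{etaestimate}. The only cosmetic difference is that the paper first computes the exact ratio for the model weight $t^{p-1+\delta}$ and then sandwiches $\eta_a$ between constant multiples of it, which is algebraically identical to your computation.
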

\begin{proof}
We have:
\begin{align*}
\int_t^a\left(\sigma^{p-1+\delta}\right)^{-\frac{1}{p-1}}\,d\sigma
=\int_t^a\sigma^{-1-\delta/(p-1)}\,d\sigma
=\frac{p-1}{\delta}\left(t^{-\delta/(p-1)}-a^{-\delta/(p-1)}\right).
\end{align*}
Consequently,
\begin{align*}
\frac{\left(t^{p-1+\delta}\right)^{-\frac{1}{p-1}}}
{\int_t^a\left(\sigma^{p-1+\delta}\right)^{-\frac{1}{p-1}}\,d\sigma}
=\frac{\delta}{p-1}\,
\frac{a^{\delta/(p-1)}}{t\,(a^{\delta/(p-1)}-t^{\delta/(p-1)})}.
\end{align*}
On the other hand, in view of the assumption~\eqref{phibasic} on
$\phi$, we have
\[
\left(\frac{c_1}{c_2}\right)^{\frac{1}{p-1}}\frac{\left(t^{p-1+\delta}\right)^{-\frac{1}{p-1}}}
{\int_t^a\left(\sigma^{p-1+\delta}\right)^{-\frac{1}{p-1}}\,d\sigma}
\le\etaa(t) \le
\left(\frac{c_2}{c_1}\right)^{\frac{1}{p-1}}\frac{\left(t^{p-1+\delta}\right)^{-\frac{1}{p-1}}}
{\int_t^a\left(\sigma^{p-1+\delta}\right)^{-\frac{1}{p-1}}\,d\sigma}
\]
and the asserted estimate follows.
\end{proof}
For later use, we also note that $\etaa$ satisfies a Riccati equation:
\begin{equation}
\etaa \frac{\phi ^{\prime }}{\phi }+(p-1)\etaa ^{\prime }=(p-1)\etaa ^{2}
\qquad\mathrm{in\ }(0,a).
\label{Eq_eta}
\end{equation}
The following Hardy inequality holds.
\begin{proposition}
\label{prop:hardy}
Let $a>0$, $p>1$ and
suppose that $\phi $ satisfies \eqref{phibasic}.
Let $\etaa$ be
correspondingly defined by \eqref{eta}. Then, for every $u\in \mathcal{E}$, the following inequality holds:
\begin{equation}
\int_{0}^{a}\left\vert u^{\prime }\right\vert ^{p}\phi\,dt\geq
\left( \frac{p-1}{p}\right) ^{p}\int_{0}^{a}\left\vert u\right\vert ^{p}\etaa ^{p}\phi\,dt.
\label{dis_p_1_dim}
\end{equation}
The constant $\left(\frac{p-1}{p}\right) ^{p}$ is sharp.
\end{proposition}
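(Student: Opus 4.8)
The plan is to adapt the classical Hardy--Littlewood--P\'olya argument (the method of \cite{HLP}) to the present weighted setting, the genuinely new point being a careful use of the two--sided control \eqref{phibasic} on $\phi$, especially in the sharpness part. Throughout set $G(t)=\int_t^a\phi(\sigma)^{-1/(p-1)}\,d\sigma$ for $t\in(0,a]$, so that $G(a)=0$, $G>0$ on $(0,a)$, $G'=-\phi^{-1/(p-1)}$ and $\eta_a=\phi^{-1/(p-1)}/G=-G'/G$. The two identities I will use repeatedly are $\eta_a^{p-1}\phi=G^{-(p-1)}$ and $\frac{d}{dt}\bigl(G^{-(p-1)}\bigr)=(p-1)\,\eta_a^p\phi$ (the latter being a consequence of the Riccati relation \eqref{Eq_eta}). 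Note also that $G(t)\to+\infty$ as $t\to0^+$, since \eqref{phibasic} gives $\phi(\sigma)^{-1/(p-1)}\ge c_2^{-1/(p-1)}\sigma^{-1-\delta/(p-1)}\notin L^1$ near $0$; more precisely, exactly as in the proof of Lemma~\ref{lem:eta}, $G(t)\asymp t^{-\delta/(p-1)}$ and $\eta_a(t)\asymp t^{-1}$ as $t\to0^+$, whereas $G(t)\asymp a-t$ as $t\to a^-$.

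First I would prove \eqref{dis_p_1_dim}. By Lemma~\ref{lem:approx} it suffices to take $u\in C_0^1[0,a]$, which in particular vanishes near $0$ and satisfies $u(a)=0$, so both sides of \eqref{dis_p_1_dim} are finite. Writing $\eta_a^p\phi=(p-1)^{-1}\frac{d}{dt}G^{-(p-1)}$ and integrating by parts,
\[
\int_0^a|u|^p\eta_a^p\phi\,dt
=\frac1{p-1}\Bigl[\,|u|^pG^{-(p-1)}\Bigr]_0^a
-\frac p{p-1}\int_0^aG^{-(p-1)}|u|^{p-1}\operatorname{sign}(u)\,u'\,dt ;
\]
the boundary term vanishes, at $t=a$ because $|u(t)|^pG(t)^{-(p-1)}=O(a-t)$ and at $t=0$ because $u\equiv0$ there. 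Using $G^{-(p-1)}=\eta_a^{p-1}\phi$, bounding $\operatorname{sign}(u)u'$ by $|u'|$, and applying H\"older with exponents $p/(p-1)$ and $p$ to the splitting $\bigl[(\eta_a|u|)^{p-1}\phi^{(p-1)/p}\bigr]\cdot\bigl[\,|u'|\phi^{1/p}\bigr]$, I obtain
\[
\int_0^a|u|^p\eta_a^p\phi\,dt
\le\frac p{p-1}\Bigl(\int_0^a|u|^p\eta_a^p\phi\,dt\Bigr)^{\frac{p-1}p}
\Bigl(\int_0^a|u'|^p\phi\,dt\Bigr)^{\frac1p},
\]
and \eqref{dis_p_1_dim} follows by dividing. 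For a general $u\in\mathcal E$, I approximate by $u_k\in C_0^1[0,a]$ with $\|u_k-u\|\to0$; since $u_k(a)=u(a)=0$ one has $u_k(t)=-\int_t^au_k'\to-\int_t^au'=u(t)$ for each $t$, and Fatou's lemma carries \eqref{dis_p_1_dim} over to $u$.

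For sharpness the candidate extremal is $u^\star=G^{(p-1)/p}$: a direct computation gives $|{u^\star}'|^p\phi=(\tfrac{p-1}p)^p\eta_a$ and $|u^\star|^p\eta_a^p\phi=\eta_a$ pointwise, so the ratio in \eqref{dis_p_1_dim} equals $(\tfrac{p-1}p)^p$ for $u^\star$, but $\int_0^a\eta_a\,dt=[-\ln G]_0^a=+\infty$ at \emph{both} endpoints, so $u^\star\notin\mathcal E$ and one must truncate. For $0<\alpha<\beta<a$ I take
\[
u_{\alpha,\beta}(t)=
\begin{cases}
\dfrac t\alpha\,G(\alpha)^{\frac{p-1}p}, & 0\le t\le\alpha,\\[2mm]
G(t)^{\frac{p-1}p}, & \alpha\le t\le\beta,\\[2mm]
\dfrac{a-t}{a-\beta}\,G(\beta)^{\frac{p-1}p}, & \beta\le t\le a,
\end{cases}
\]
which belongs to $\mathcal E$. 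On the middle interval, $\int_\alpha^\beta|u_{\alpha,\beta}'|^p\phi=(\tfrac{p-1}p)^p\ln\tfrac{G(\alpha)}{G(\beta)}$ and $\int_\alpha^\beta|u_{\alpha,\beta}|^p\eta_a^p\phi=\ln\tfrac{G(\alpha)}{G(\beta)}$, both tending to $+\infty$ as $(\alpha,\beta)\to(0,a)$; the four contributions of the linear pieces, computed explicitly, all remain $O(1)$ --- for instance $\int_0^\alpha|u_{\alpha,\beta}'|^p\phi=\alpha^{-p}G(\alpha)^{p-1}\int_0^\alpha\phi\le\tfrac{c_2}{p+\delta}\alpha^{\delta}G(\alpha)^{p-1}=O(1)$ thanks to $G(\alpha)\asymp\alpha^{-\delta/(p-1)}$, while near $a$ the estimate $G(\beta)\asymp a-\beta$ together with the factor $(a-t)^p$ present in $|u_{\alpha,\beta}|^p$ keeps the remaining terms bounded. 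Hence
\[
\frac{\displaystyle\int_0^a|u_{\alpha,\beta}'|^p\phi\,dt}{\displaystyle\int_0^a|u_{\alpha,\beta}|^p\eta_a^p\phi\,dt}
=\frac{(\tfrac{p-1}p)^p\ln\tfrac{G(\alpha)}{G(\beta)}+O(1)}{\ln\tfrac{G(\alpha)}{G(\beta)}+O(1)}
\;\longrightarrow\;\Bigl(\tfrac{p-1}p\Bigr)^p
\]
as $(\alpha,\beta)\to(0,a)$, which shows the constant is optimal.

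The first half is bookkeeping once the two identities for $\eta_a$ and $G$ are recorded. The main obstacle is the sharpness: one must produce an honest minimizing sequence inside $\mathcal E$ and check that the truncation errors stay bounded while the principal term diverges logarithmically --- and it is precisely here that the polynomial pinching $c_1t^{p-1+\delta}\le\phi\le c_2t^{p-1+\delta}$, through the asymptotics $G(t)\asymp t^{-\delta/(p-1)}$, $\eta_a(t)\asymp t^{-1}$ near $0$ and $G(t)\asymp a-t$ near $a$, is essential.
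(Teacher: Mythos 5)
Your proof is correct, and it takes a genuinely different route from the paper in both halves. For the inequality, the paper applies the pointwise convexity estimate $|\alpha|^p\ge|\beta|^p+p|\beta|^{p-2}\beta(\alpha-\beta)$ with $\beta=-\frac{p-1}{p}u\eta_a$, integrates by parts and invokes the Riccati identity \eqref{Eq_eta}; you instead write $\eta_a^p\phi=\frac{1}{p-1}\frac{d}{dt}G^{-(p-1)}$, integrate by parts once and finish with H\"older. The two computations are essentially equivalent (your differentiation identity is \eqref{Eq_eta} in disguise), but yours is the classical Hardy--Littlewood--P\'olya presentation, and you are more explicit than the paper about passing from $C_0^1$ to general $u\in\mathcal E$ (pointwise convergence plus Fatou). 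The real divergence is in the sharpness step, and here your version is not merely stylistic: the paper tests with $U_k=G^{(p-1)/p}$ truncated to a constant only near $0$, but since $\eta_a(t)\sim(a-t)^{-1}$ as $t\to a^-$, one has $\int_0^a|U_k'|^p\phi\,dt=\left(\frac{p-1}{p}\right)^p\int_{1/k}^a\eta_a\,dt=+\infty$ for every fixed $k$, so $U_k\notin\mathcal E$ and the quantity $B_k$ in \eqref{B_k} is infinite; as written, the paper's limit is of the form $\infty/\infty$, and the cutoff near $a$ that cures this only appears later, in the functions $V_k$ of Proposition~\ref{One_dim_Hardy_Tr}, which is stated under the extra hypothesis \eqref{log_concave}. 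Your two-sided truncation $u_{\alpha,\beta}$ keeps the competitors honestly in $\mathcal E$, and the bookkeeping is sound: the middle piece contributes the divergent term $\ln\bigl(G(\alpha)/G(\beta)\bigr)$ to both integrals (up to the factor $\left(\frac{p-1}{p}\right)^p$), while the four ramp contributions stay bounded uniformly thanks to $G(\alpha)\asymp\alpha^{-\delta/(p-1)}$, $\eta_a\asymp t^{-1}$ near $0$ and $G(\beta)\asymp a-\beta$, $\eta_a\asymp(a-t)^{-1}$ near $a$ (the denominator ramp terms need the sharper asymptotics rather than the crude bound $G(t)\ge G(\alpha)$, but with the asymptotics you quote they are indeed $O(1)$). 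In exchange, the paper's splitting isolates the finer information $A_k\to\frac{1}{p-1}$, which it reuses in the proof of Proposition~\ref{One_dim_Hardy_Tr}; your construction, on the other hand, proves sharpness directly with admissible test functions and without any log-concavity assumption, effectively repairing the gap in the paper's own verification.
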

\begin{proof}
In view of Lemma~\ref{lem:approx} we may assume that $u\in
C_{0}^{1}\left( 0,a\right)$. We recall the elementary convexity
inequality $\left\vert\alpha\right\vert ^{p}\geq\left\vert
\beta\right\vert ^{p}+p\left\vert\beta\right\vert ^{p-2}\beta(\alpha-\beta)$ for all
$\alpha,\beta\in\mathbb{R}$.
Taking $\alpha=u^{\prime}$ and
$\beta=-\frac{p-1}{p}u\etaa$, we derive:
\begin{equation}
\left\vert u^{\prime }\right\vert ^{p}\geq \left\vert
\frac{p-1}{p}u\etaa \right\vert ^{p}-p\left\vert \frac{p-1}{p}u\etaa
\right\vert ^{p-2}\frac{p-1}{ p}u\etaa \left( u^{\prime
}+\frac{p-1}{p}u\etaa \right).  \label{conv_ineq_u'}
\end{equation}
Multiplying by $\phi$ and integrating over $\left[ 0,a\right]$, we
obtain:
\begin{equation*}
\int_{0}^{a}\left\vert u^{\prime }\right\vert ^{p}\phi\,dt\geq
\left(\frac{ p-1}{p}\right)^{p}(1-p)\int_{0}^{a}\left\vert
u\right\vert ^{p}\etaa ^{p}\phi\,dt-p\left( \frac{p-1}{p}\right)^{p-1}
\int_{0}^{a}\left\vert u\right\vert ^{p-2}uu^{\prime }\etaa^{p-1}\phi\,dt.
\end{equation*}
Integration by parts yields
\begin{eqnarray*}
\int_{0}^{a}\left\vert u^{\prime }\right\vert ^{p}\phi\,dt &\geq
&\left( \frac{p-1}{p}\right) ^{p}(1-p)\int_{0}^{a}\left\vert
u\right\vert ^{p}\etaa ^{p}\phi\,dt+\left( \frac{p-1}{p}\right)^{p-1}\int_{0}^{a}\left\vert
u\right\vert ^{p}\left( \etaa ^{p-1}\phi \right) ^{\prime }\,dt \\
&&-\left( \frac{p-1}{p}\right)^{p-1}
\left[
\left\vert u\right\vert^{p}\etaa ^{p-1}\phi \right]_{0}^{a}.
\end{eqnarray*}
Now we observe that by \eqref{etaestimate}
and the fact that $u\in C_{0}^{1}\left(\left[0,a\right]\right)$,
we have $u\etaa\in L^\infty(0,a)$.
Therefore, the
boundary terms vanish and we obtain
\begin{align*}
\int_{0}^{a}&\left\vert u^{\prime }\right\vert ^{p}\phi\,dt \\
\geq&(1-p)\left(\frac{p-1}{p}\right)^{p}
\int_{0}^{a}\left\vert
u\right\vert ^{p}\etaa ^{p}\phi\,dt+\left(\frac{p-1}{p}\right)^{p-1}
\int_{0}^{a}\left\vert u\right\vert ^{p}\left( \etaa^{p-1}\phi^{\prime }
+(p-1)\etaa^{p-2}\etaa ^{\prime }\phi \right)\,dt \\
=&(1-p)\left( \frac{p-1}{p} \right)^{p}
\int_{0}^{a}\left\vert u\right\vert^{p}\etaa^{p}\phi\,dt
+\left( \frac{p-1}{p}\right)^{p-1}\int_{0}^{a}\left\vert u\right\vert ^{p}
\etaa ^{p-2}\left(\etaa\frac{\phi^{\prime}}{\phi}+(p-1)\etaa ^{\prime}\right)\phi\,dt.
\end{align*}
In view of \eqref{Eq_eta}, we have
\[
\int_{0}^{a}\left\vert u\right\vert ^{p}\etaa ^{p-2}\left( \etaa
\frac{\phi ^{\prime }}{\phi } +(p-1)\etaa ^{\prime }\right)\phi\,dt
=(p-1)\int_{0}^{a}\left\vert u\right\vert ^{p}\etaa ^{p}\phi\,dt.
\]
It follows that
\begin{equation*}
\int_{0}^{a}\left\vert u^{\prime }\right\vert ^{p}\phi\,dt
\geq
\left[
\left(\frac{p-1}{p}\right)^{p}-\frac{(p-1)}{p^{p-1}}^{p}
\right]
\int_{0}^{a}\left\vert u\right\vert ^{p}\etaa^{p}\phi\,dt
+\frac{(p-1)^{p}}{p^{p-1}} \int_{0}^{a}\left\vert
u\right\vert ^{p}\etaa ^{p}\phi\,dt=\left(\frac{p-1}{p}\right)^{p}\int_{0}^{a}\left\vert u\right\vert^{p}\etaa ^{p}\phi\,dt.
\end{equation*}
Hence, \eqref{dis_p_1_dim} is satisfied.
\par
Now we verify sharpness. To this end, we consider the sequence of functions
$\{U_k\}_{k\in\mathbb N}\subset\mathcal E$ defined by
\begin{equation}
U_{k}(t)=\begin{cases} \left( \int_{\frac{1}{k}}^{a}
\phi^{-\frac{1}{p-1}}\,d\sigma \right)^{\frac{p-1}{p}}&\mathrm{if\ }t\in\left[0,\frac{1}{k}\right)\\
\left(\int_{t}^{a}\phi ^{-\frac{1}{p-1}}\,d\sigma \right)^{\frac{p-1}{p}}
&\mathrm{if\ }t\in \left[ \frac{1}{k},a\right]
\end{cases}.
\label{U_k}
\end{equation}
Then,
\begin{equation*}
U_{k}^{\prime }(t)=
\begin{cases}
0&\mathrm{if\ }t\in \left[ 0,\frac{1}{k}\right)\\
-\frac{p-1}{p}\left(\int_{t}^{a}\phi ^{-\frac{1}{p-1}}\,d\sigma
\right)^{-\frac{1}{p}}\phi^{-\frac{1}{p-1}}(t)& \mathrm{if\ }
t\in \left[ \frac{1}{k},a \right]
\end{cases}.
\end{equation*}
We claim that
\begin{equation*}
\lim_{k\rightarrow +\infty }\dfrac{\int_{0}^{a}\left\vert
U_{k}^{\prime }\right\vert ^{p}\phi\,dt}{\int_{0}^{a}U_{k}^{p}\etaa
^{p}\phi\,dt}=\left( \frac{p-1}{p}\right) ^{p}.
\end{equation*}
Indeed, we note that
\begin{equation*}
(\etaa^{p}\phi)(t)=\frac{\phi^{-\frac{1}{p-1}}(t)}{\left(
\int_{t}^{a}\phi ^{- \frac{1}{p-1}}d\sigma \right) ^{p}}
\end{equation*}
for all $t\in(0,a)$.
Therefore, we may write
\begin{equation*}
\int_{0}^{a}U_{k}^{p}\etaa^{p}\phi\,dt=A_{k}+B_{k},
\end{equation*}
where
\begin{equation}
A_{k}\equiv\int_{0}^{\frac{1}{k}}U_{k}^{p}\etaa^{p}\phi\,dt
=\left( \int_{\frac{1}{k}}^{a}\phi^{-\frac{1}{p-1}}\,d\sigma
\right) ^{p-1}\int_{0}^{\frac{1}{k}}
\frac{\phi^{-\frac{1}{p-1}}(t)}{\left( \int_{t}^{a}\phi^{-\frac{1}{p-1}}\,d\sigma\right)^{p}}\,dt
\label{A_k}
\end{equation}
and
\begin{equation}
B_{k} \equiv \int_{\frac{1}{k}}^{a}U_{k}^{p}\etaa^{p}\phi\,dt
=\int_{\frac{1}{k}}^{a}\left(\int_{t}^{a}\phi^{-\frac{1}{p-1}}\,d\sigma\right)^{p-1}
\frac{\phi^{-\frac{1}{p-1}}(t)}{\left(\int_{t}^{a}\phi^{-\frac{1}{p-1}}\,d\sigma \right)^{p}}\,dt
=\int_{\frac{1}{k}}^{a}\frac{\phi^{-\frac{1}{p-1}}(t)}{\int_{t}^{a}\phi ^{-\frac{1}{p-1}}d\sigma}\,dt.
\label{B_k}
\end{equation}
We claim that
\begin{equation}
\label{AkHopital}
\lim\limits_{k\rightarrow +\infty }A_{k}=\frac{1}{p-1}.
\end{equation}
Indeed, we first observe that in view of \eqref{phibasic}
we have
\begin{align}
\label{infty}
\int_{1/k}^a\frac{\phi^{-\frac{1}{p-1}}(t)}{\int_t^a\phi^{-\frac{1}{p-1}}\,d\sigma}\,dt
\ge\left(\frac{c_2}{c_1}\right)^{-\frac{1}{p-1}}\int_{1/k}^a\frac{t^{-1-\frac{\delta}{p-1}}}
{\int_t^a\sigma^{-1-\frac{\delta}{p-1}}\,d\sigma}\,dt\to+\infty
\end{align}
as $k\to\infty$.
Hence, by L'Hospital's rule,
\begin{equation*}
\lim\limits_{k\rightarrow +\infty }A_{k}=\lim_{k\rightarrow
+\infty }\frac{
\int_{0}^{\frac{1}{k}}\frac{\phi^{-\frac{1}{p-1}}}
{\left(\int_{t}^{a}\phi^{-\frac{1}{p-1}}\,d\sigma\right)^{p}}\,dt}
{\left(\int_{\frac{1}{k}}^{a}\phi ^{-\frac{1}{p-1}}\,dt\right)^{-(p-1)}}
=\frac{\frac{\phi (\frac{1}{k})^{-\frac{1}{p-1}}}{\left( \int_{\frac{1}{k}}^{a}
\phi^{-\frac{1}{p-1}}\,d\sigma \right)^{p}}}{(p-1)\left(\int_{\frac{1}{k}}^{a}
\phi^{-\frac{1}{p-1}}\,dt\right) ^{-p}\phi(\frac{1}{k})^{-\frac{1}{p-1}}}=\frac{1}{p-1},
\end{equation*}
and \eqref{AkHopital} follows.
We conclude that
\begin{equation*}
\int_{0}^{a}U_{k}^{p}\etaa^{p}\phi\,dt=\int_{\frac{1}{k}}^{a}
\frac{\phi^{-\frac{1}{p-1}}(t)}{\int_{t}^{a}\phi^{-\frac{1}{p-1}}\,d\sigma}\,dt+\frac{1}{p-1}+o(1).
\end{equation*}
On the other hand, we have
\begin{equation*}
\int_{0}^{a}\left\vert U_{k}^{\prime }\right\vert^{p}\phi\,dt
=\left(\frac{p-1}{p}\right)^{p}\int_{\frac{1}{k}}^{a}\frac{\phi^{-\frac{1}{p-1}}(t)}
{\int_{t}^{a}\phi^{-\frac{1}{p-1}}\,d\sigma}\,dt.
\end{equation*}
Hence, recalling \eqref{infty}, we obtain
\begin{equation*}
\lim_{k\rightarrow +\infty }\dfrac{\int_{0}^{a}\left\vert
U_{k}^{\prime }\right\vert ^{p}\phi\,dt}{\int_{0}^{a}U_{k}^{p}\etaa^{p}\phi\,dt}
=\left( \frac{p-1}{p}\right)^{p}\lim_{k\rightarrow+\infty }
\frac{\int_{\frac{1}{k} }^{a}\frac{\phi^{-\frac{1}{p-1}}}
{\int_{t}^{a}\phi ^{-\frac{1}{p-1}}\,d\sigma}\,dt}
{\int_{\frac{1}{k}}^{a}\frac{\phi^{-\frac{1}{p-1}}}{\int_{t}^{a}
\phi ^{-\frac{1}{p-1}}\,d\sigma}\,dt+\frac{1}{p-1}+o(1)}=\left(\frac{p-1}{p}\right)^{p}.
\end{equation*}
Hence, the sharpness is also established.
\end{proof}
Now we show that under an extra simple assumption for $\phi$, the corresponding
function $\etaa$ defined by \eqref{eta} has exactly one critical point,
corresponding to the absolute minimum of $\etaa$ in $(0,a)$.
\begin{lemma}
\label{phi_log_conc}
Suppose that $\phi:[0,a]\to\mathbb{R}$ satisfies
\eqref{phibasic}. Furthermore, suppose that $\phi$ is twice differentiable
in $(0,a)$ and that
\begin{equation}
\left(\log\phi\right)^{\prime\prime}(t)
=\left(\frac{\phi^{\prime }}{\phi}\right)^{\prime}(t)<0\qquad
\mathrm{\ for\ all\ }t\in(0,a).  \label{log_concave}
\end{equation}
Then, there exists a unique $T\in(0,a)$ such that $\etaa^{\prime}(t)<0$ in $(0,T)$
and $\etaa^{\prime}(t)>0$ in $(T,a)$.
\end{lemma}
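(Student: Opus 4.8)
The plan is to read off the monotonicity of $\eta_a$ directly from the Riccati equation \eqref{Eq_eta}. Writing $\phi'/\phi=(\log\phi)'$, equation \eqref{Eq_eta} becomes
\[
\eta_a' = \eta_a^2 - \frac{1}{p-1}(\log\phi)'\,\eta_a \qquad \text{in } (0,a).
\]
Since $\phi$ is twice differentiable in $(0,a)$, so is $\eta_a$, and differentiating this identity gives
\[
\eta_a'' = 2\eta_a\eta_a' - \frac{1}{p-1}\bigl[(\log\phi)''\,\eta_a + (\log\phi)'\,\eta_a'\bigr].
\]
The key observation — and the one step I regard as the crux of the argument — is that at any critical point $t_0\in(0,a)$ of $\eta_a$, i.e.\ where $\eta_a'(t_0)=0$, the last display collapses to $\eta_a''(t_0)=-\frac{1}{p-1}(\log\phi)''(t_0)\,\eta_a(t_0)$, which is \emph{strictly positive} by the log-concavity hypothesis \eqref{log_concave} together with $\eta_a>0$ and $p>1$. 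Hence every critical point of $\eta_a$ in $(0,a)$ is a strict local minimum.

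From this I would deduce uniqueness of the critical point by an elementary argument: if $t_1<t_2$ were two critical points, then $\eta_a$ would attain its maximum over $[t_1,t_2]$ at a point $\tau$ which — since $t_1$ and $t_2$ are strict local minima — cannot be an endpoint, so $\tau\in(t_1,t_2)\subset(0,a)$ would be an interior local maximum and therefore a critical point, contradicting the fact that it must be a strict local minimum. Next, $\eta_a$ is continuous and positive on $(0,a)$ and blows up at both endpoints: by \eqref{etaestimate} one has $\eta_a(t)\to+\infty$ as $t\to0^+$ and as $t\to a^-$ (alternatively, near $a$ the numerator of \eqref{eta} stays bounded away from $0$ while the denominator tends to $0$, and near $0$ the bounds \eqref{phibasic} force $\eta_a(t)\ge \mathrm{const}\cdot t^{-1}$). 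Therefore $\eta_a$ attains its infimum over $(0,a)$ at an interior point $T$, which is then a critical point, and by the uniqueness above $T$ is the only one.

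It remains to pin down the signs. Since $\eta_a'$ is continuous on $(0,a)$ and vanishes only at $T$, it keeps a constant sign on each of the intervals $(0,T)$ and $(T,a)$; it cannot be positive on $(0,T)$, for then $\eta_a$ would be increasing there and, by continuity, would attain its minimum value $\eta_a(T)$ at interior points different from $T$, contradicting the uniqueness of the minimizer — and the symmetric argument rules out $\eta_a'<0$ on $(T,a)$. Hence $\eta_a'<0$ in $(0,T)$ and $\eta_a'>0$ in $(T,a)$, as asserted. I expect essentially no difficulty beyond the crux step singled out above; the only point requiring a little care is the endpoint behaviour of $\eta_a$, which is what guarantees that its minimum is attained in the interior.
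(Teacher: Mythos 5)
Your proposal is correct and follows essentially the same route as the paper: differentiating the Riccati equation \eqref{Eq_eta} to show that every critical point of $\etaa$ is a strict local minimum, and then using the blow-up of $\etaa$ at both endpoints (Lemma~\ref{lem:eta}) to get a unique interior minimizer $T$. You merely spell out in more detail the uniqueness and the sign of $\etaa'$ on $(0,T)$ and $(T,a)$, which the paper leaves implicit.
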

\begin{proof}
Differentiating the Riccati equation~\eqref{Eq_eta} we obtain
\begin{equation}
\etaa^{\prime}\frac{\phi^{\prime}}{\phi}
+\etaa\left(\frac{\phi^{\prime}}{\phi}\right)^{\prime}
+(p-1)\etaa^{\prime\prime}=2(p-1)\etaa\etaa^{\prime}.
\label{eta_der_2}
\end{equation}
Suppose that $\etaa'(\widehat{t})=0$. Then,
(\ref{eta_der_2}) implies that
\begin{equation*}
\etaa^{\prime\prime}(\widehat{t})=-\frac{1}{p-1}\etaa
(\widehat{t})\left(\frac{\phi^{\prime }}{\phi}\right)^{\prime}(\widehat{t})>0.
\end{equation*}
It follows that any critical point for $\etaa$ is necessarily a
strict minimum point. In view of Lemma~\ref{lem:eta}, it
follows that $\etaa$ admits a unique minimum point and the
existence of $T$ is established.
\end{proof}
Let $\phi$ be twice differentiable and suppose that $\phi$
satisfies \eqref{phibasic} and \eqref{log_concave}.
Then, the following
function obtained by truncating $\etaa$ at the point $T$, is non-increasing:
\begin{equation}
\label{etaT}
\etaaT(t)=
\begin{cases}
\etaa(t)&\mathrm{for\ }t\in\left( 0,T\right] \\
\etaa(T)&\mathrm{for\ } t\in \left( T,a\right].
\end{cases}
\end{equation}
Since $\etaaT\le\etaa$ pointwise, it is clear that Proposition~\ref%
{prop:hardy} still holds with $\etaa$ replaced by $\etaaT$. On the other
hand, it is not a priori clear whether or not, with such a replacement, the constant
$[(p-1)/p]^p$ is still sharp. In the next proposition we show that this is
indeed the case.
\begin{proposition}
\label{One_dim_Hardy_Tr}
Suppose that $\phi$ is twice differentiable and satisfies
\eqref{phibasic} and \eqref{log_concave}. Let $\etaaT$ be defined by
\eqref{etaT}. Then,
\begin{equation*}
\int_{0}^{a}\left\vert u^{\prime }\right\vert^{p}\phi\,dt
\geq\left(\frac{p-1 }{p}\right)^{p}\int_{0}^{a}\left\vert u\right\vert^{p}
\etaaT^{p}\phi\, dt,\ \forall u\in\mathcal E.
\end{equation*}
Furthermore, the constant $\left(\frac{p-1}{p}\right)^{p}$ is sharp.
\end{proposition}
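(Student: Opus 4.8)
The inequality requires no new work. By Lemma~\ref{phi_log_conc} the point $T$ is the unique, hence absolute, minimum of $\etaa$ on $(0,a)$, so $\etaaT(t)=\min\{\etaa(t),\etaa(T)\}\le\etaa(t)$ throughout $(0,a)$ (with equality on $(0,T]$), and Proposition~\ref{prop:hardy} gives at once
\[
\int_0^a|u'|^p\phi\,dt\ \ge\ \Big(\tfrac{p-1}{p}\Big)^p\int_0^a|u|^p\etaa^p\phi\,dt\ \ge\ \Big(\tfrac{p-1}{p}\Big)^p\int_0^a|u|^p\etaaT^p\phi\,dt .
\]
The whole content of the statement is thus the sharpness: to exhibit $\{u_k\}\subset\mathcal E$ with $\int_0^a|u_k'|^p\phi\,dt\big/\int_0^a|u_k|^p\etaaT^p\phi\,dt\to[(p-1)/p]^p$.

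The guiding idea is that $\etaaT\equiv\etaa$ on $(0,T)$, so a family that concentrates its Hardy energy near the origin cannot feel the truncation. I would take the extremal profile \eqref{U_k} of Proposition~\ref{prop:hardy}, cut it off at a \emph{fixed} $\epsilon\in(0,T)$, and drive it affinely to $0$ afterwards: with $F(t):=\int_t^a\phi^{-1/(p-1)}\,d\sigma$ and $k>1/\epsilon$,
\[
u_k(t)=
\begin{cases}
F(1/k)^{(p-1)/p}, & t\in[0,1/k],\\[2pt]
F(t)^{(p-1)/p}, & t\in[1/k,\epsilon],\\[2pt]
\dfrac{a-t}{\,a-\epsilon\,}\,F(\epsilon)^{(p-1)/p}, & t\in[\epsilon,a].
\end{cases}
\]
The cut-off is essential, not cosmetic: near $t=a$ one has $F(t)\sim c\,(a-t)$, so $F^{(p-1)/p}$ together with the weight $\etaa$ is not in $L^p(\phi)$ there, and the untruncated function would not lie in $\mathcal E$; the point of replacing $\etaa$ by the bounded $\etaaT$ on $(T,a)$ is precisely that it keeps the right-hand side finite.

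The evaluation of the quotient then splits over $[0,1/k]$, $[1/k,\epsilon]$ and $[\epsilon,a]$. On $[\epsilon,a]$, $u_k$ is a fixed affine map, $\etaaT$ is bounded (it is the continuous $\etaa$ on $[\epsilon,T]$ and the constant $\etaa(T)$ on $[T,a]$) and $\phi$ is bounded below, so both $\int_\epsilon^a|u_k'|^p\phi$ and $\int_\epsilon^a|u_k|^p\etaaT^p\phi$ are $O(1)$. On $[0,1/k]$ one has $u_k'\equiv0$, and writing $\etaa^p\phi=\phi^{-1/(p-1)}/F^p=-F'/F^p$ and using $F(0^+)=+\infty$ one gets $\int_0^{1/k}|u_k|^p\etaa^p\phi\,dt=F(1/k)^{p-1}\int_0^{1/k}(-F'/F^p)\,dt=\tfrac1{p-1}$ identically (this is $A_k$ of \eqref{A_k}, cf.\ \eqref{AkHopital}). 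On $[1/k,\epsilon]\subset(0,T)$, where $\etaaT=\etaa$, the same antiderivative gives $\int_{1/k}^\epsilon|u_k|^p\etaa^p\phi\,dt=\log F(1/k)-\log F(\epsilon)$ and $\int_{1/k}^\epsilon|u_k'|^p\phi\,dt=\big(\tfrac{p-1}{p}\big)^p(\log F(1/k)-\log F(\epsilon))$.

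Adding up, $\int_0^a|u_k'|^p\phi\,dt=\big(\tfrac{p-1}{p}\big)^p\log F(1/k)+O(1)$ and $\int_0^a|u_k|^p\etaaT^p\phi\,dt=\log F(1/k)+O(1)$, so the ratio tends to $[(p-1)/p]^p$ as soon as $\log F(1/k)\to+\infty$; and $F(1/k)=\int_{1/k}^a\phi^{-1/(p-1)}\to+\infty$ because, by \eqref{phibasic}, $\phi^{-1/(p-1)}(t)\ge c\,t^{-1-\delta/(p-1)}$ near the origin, which is not integrable (this is the divergence \eqref{infty}, and the same fact gives $F(0^+)=+\infty$ used above). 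The one delicate point, and the main obstacle, is the endpoint $t=a$: the test functions must be arranged so as to genuinely stay in $\mathcal E$ and so that the $[\epsilon,a]$-contribution is a bounded remainder, which is exactly what leaves the logarithmically divergent near-origin part — insensitive to the truncation of $\etaa$ into $\etaaT$ — to dictate the limit.
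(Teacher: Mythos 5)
Your proposal is correct and takes essentially the same route as the paper: the inequality itself is immediate from Proposition~\ref{prop:hardy} since $\etaaT\le\etaa$, and sharpness is obtained with the same quasi-extremals, namely the profile $\left(\int_t^a\phi^{-1/(p-1)}\,d\sigma\right)^{(p-1)/p}$ from \eqref{U_k} used near the origin and cut off by an affine ramp to zero so that the test functions lie in $\mathcal E$ (the paper cuts at $T$ and ramps to zero on $[T,(a+T)/2]$, you cut at a fixed $\epsilon\in(0,T)$ and ramp on $[\epsilon,a]$), with the shared divergent near-origin contribution, unaffected by the truncation of $\etaa$ into $\etaaT$, dominating the $k$-independent remainders. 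Your explicit evaluation via the antiderivative $\log F$ and the exact identity $A_k=\tfrac{1}{p-1}$ is a minor streamlining of the paper's bookkeeping with $A_k$, $B_k$ and L'Hospital, not a different argument.
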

\begin{proof}
We need only check sharpness. For $k\in\mathbb N$, $1/k<T$,
we consider the sequence $\{V_k\}_{k\in\mathbb N}\subset\mathcal E$
defined by
\begin{equation}
V_{k}(t)=
\begin{cases}
U_k(t)
&\mathrm{if\ }t\in[0,T)\\
\left(\int_{T}^{a}\phi^{-\frac{1}{p-1}}\,d\sigma\right)^{\frac{p-1}{p}} \frac{2t-a-T}{T-a}
&\mathrm{if\ }t\in \left[T,(a+T)/2\right)
\\
0 & \text{if\ }t\in \left[(a+T)/2,a\right]
\end{cases},
\label{U_tilde_k}
\end{equation}
where $\{U_k\}_{k\in\mathbb N}$ is the sequence defined
in \eqref{U_k}.
Then,
\begin{equation*}
V_{k}^{\prime }(t)=
\begin{cases}
U_k'(t)
&\mathrm{if\ }t\in[0,T)\\
\left(\int_{T}^{a}\phi ^{-\frac{1}{p-1}}\,d\sigma \right)^{\frac{p-1}{p}}
\frac{2}{T-a}& \text{if\ }  t\in \left[ T,(a+T)/2\right) \\
0 & \text{if\ }  t\in \left[ (a+T)/2,a\right]
\end{cases}.
\end{equation*}
We claim that
\begin{equation*}
\lim_{k\rightarrow +\infty }\dfrac{\int_{0}^{a}\left\vert
V_{k}^{\prime }\right\vert ^{p}\phi\,dt}{\int_{0}^{a}\left\vert
V_{k}\right\vert ^{p}\etaaT^{p}\phi\,dt}=\left(
\frac{p-1}{p}\right) ^{p}.
\end{equation*}
Let $C_1,C_2>0$ be defined by
\begin{align*}
&C_{1} =\left(\frac{p}{p-1}\right)^p\int_{T}^{(T+a)/2}
\left\vert V_{k}^{\prime}\right\vert^{p}\phi\,dt,
&&C_{2} =\int_{T}^{(T+a)/2}\left\vert V_{k}\right\vert^{p}\etaaT^{p}\phi\,dt.
\end{align*}
Note that $C_1,C_2$ are independent of $k$.
Then, we have:
\begin{align*}
\dfrac{\int_{0}^{a}\left\vert V_{k}^{\prime}\right\vert^{p}\phi\,dt}
{\int_{0}^{a}V_{k}^{p}{\eta} _{a T}^{p}\phi\,dt}
=&\left(\frac{p-1}{p}\right) ^{p} \frac{\int_{\frac{1}{k}}^{T}
\frac{\phi^{-\frac{1}{p-1}}}{\int_{t}^{a}\phi^{-\frac{1}{p-1}}\,d\sigma}\,dt+C_{1}}
{A_{k}+\int_{\frac{1}{k}}^{T}\frac{\phi^{-\frac{1}{p-1}}}{\int_{t}^{a}\phi^{-\frac{1}{p-1}}
\,d\sigma}\,dt+C_{2}}\\
=&\left(\frac{p-1}{p}\right)^{p}\frac{\int_{\frac{1}{k}}^{T}
\frac{\phi^{-\frac{1}{p-1}}}{\int_{t}^{a}\phi^{-\frac{1}{p-1}}\,d\sigma}\,dt+C_{1}}
{\frac{1}{p-1}+o(1)+\int_{\frac{1}{k}}^{T}\frac{\phi^{-\frac{1}{p-1}}}
{\int_{t}^{a}\phi ^{-\frac{1}{p-1}}\,d\sigma}\,dt+C_{2}},
\end{align*}
where $A_k$ is defined in \eqref{A_k}.
This establishes the claim.
\end{proof}
\section{Proofs of Theorem~\ref{thm:sphere} and Theorem~\ref{thm:Fubini}}
\label{sec:wholespace}
In this section we apply Proposition~\ref{One_dim_Hardy_Tr} in order to
prove Theorem~\ref{thm:sphere} and Theorem~\ref{thm:Fubini}. In what follows
we assume that $1<p<n$. We let $a\in(0,\pi )$ and we take $\phi =\widetilde{
\phi }$, where
\begin{equation*}
\widetilde{\phi }(t)=\sin ^{n-1}(t).
\end{equation*}
We note that $\widetilde{\phi}$ satisfies assumptions \eqref{phibasic} with
$\delta =n-p$. The weight function corresponding to $\widetilde{\phi }$
defined according to \eqref{eta} is given by \eqref{tildeetaa}, namely
\begin{equation*}
\widetilde{\eta }_{a}(t)=\frac{\left( \sin t\right) ^{-\frac{n-1}{p-1}}}{
\int_{t}^{a}\left( \sin \sigma \right) ^{^{-\frac{n-1}{p-1}}}d\sigma }.
\end{equation*}
Furthermore, $\widetilde{\phi}$ is twice differentiable and we have
\begin{equation*}
\left( \log \widetilde{\phi }\right) ^{\prime \prime }(t)
=-\frac{n-1}{\sin^{2}t}
\end{equation*}
for all $t\in (0,\pi)$. In particular, $\widetilde{\phi }$ satisfies
assumption (\ref{log_concave}). 
Using L'Hospital's rule we have: 
\begin{equation}
\frac{p-1}{n-p}\lim_{t\rightarrow 0^{+}}t\widetilde{\eta}_{aT}(t)
=\frac{p-1}{n-p}\lim_{t\rightarrow 0^{+}}
\frac{t\left(\sin t \right)^{-\frac{n-1}{p-1}}}{\int_{t}^{a}
\left( \sin \sigma \right) ^{-\frac{n-1}{p-1}}\,d\sigma }
=\frac{p-1}{n-p}\lim_{t\rightarrow 0^{+}}\frac{\left(1+\circ (1)\right)
t^{-\frac{n-p}{p-1}}}{\int_{t}^{a}\sin\sigma^{-\frac{n-1}{p-1}}\,d\sigma }=1,
\label{rho_asym}
\end{equation}
and therefore \eqref{rhoasympt} follows.
The following elementary facts will be used in the sequel.
Recall that for $x=(x_1,\ldots,x_n,x_{n+1})\in\R^{n+1}$
we set $x_{n+1}=|x|\cos\theta$.
\begin{lemma}
\label{grad_radial_funct}
Let $\Omega\subset\mathbb{S}^{n}$ and suppose that $u:\Omega \rightarrow \mathbb{R}$
depends on $\theta$ only. Then:
\begin{align}
\label{mainformulae}
&|\nabla u|^{2}=\left(\frac{\partial u}{\partial \theta }\right)^{2},
&&\int_{\mathcal{B}(a )}u(\theta )\,dV=\omega _{n-1}\int_0^a
u(\theta )\widetilde\phi(\theta)\,d\theta
\end{align}
where $\omega _{n-1}=(2\pi )^{n/2}/\Gamma (n/2)$ denotes the volume of $\mathbb{S}^{n-1}$.
\end{lemma}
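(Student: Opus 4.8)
The plan is to work in geodesic polar coordinates on $\mathbb{S}^n$ centered at the north pole $N$. First I would parametrize a point $x\in\mathbb{S}^n$ by $(\theta,\omega)\in[0,\pi]\times\mathbb{S}^{n-1}$ via
\[
x=(\sin\theta\,\omega,\cos\theta),\qquad \omega\in\mathbb{S}^{n-1}\subset\mathbb{R}^n,
\]
which is consistent with the normalization $x_{n+1}=\cos\theta$ and, since $\theta=0$ corresponds to $N$, with $\theta=d_g(x,N)$. Differentiating this embedding and taking inner products in $\mathbb{R}^{n+1}$, one finds $\langle\partial_\theta x,\partial_\theta x\rangle=1$, $\langle\partial_\theta x,\partial_{\omega_j}x\rangle=\sin\theta\cos\theta\,\langle\omega,\partial_{\omega_j}\omega\rangle=0$ (because $|\omega|=1$), and $\langle\partial_{\omega_i}x,\partial_{\omega_j}x\rangle=\sin^2\theta\,(g_{\mathbb{S}^{n-1}})_{ij}$, so that the metric induced on $\mathbb{S}^n$ takes the warped-product form
\[
g=d\theta^2+\sin^2\theta\,g_{\mathbb{S}^{n-1}},
\]
where $g_{\mathbb{S}^{n-1}}$ denotes the standard metric on $\mathbb{S}^{n-1}$.

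Granting this, the first identity in \eqref{mainformulae} is immediate: since $g$ is block diagonal with respect to the splitting $\mathbb{R}\,\partial_\theta\oplus T\mathbb{S}^{n-1}$ and the $\partial_\theta$-block equals $1$, the inverse metric is $g^{-1}=\partial_\theta\otimes\partial_\theta+\sin^{-2}\theta\,g_{\mathbb{S}^{n-1}}^{-1}$; hence for $u=u(\theta)$ one has $du=u'(\theta)\,d\theta$ and $\nabla u=u'(\theta)\,\partial_\theta$, so $|\nabla u|^2=g(\nabla u,\nabla u)=u'(\theta)^2=(\partial u/\partial\theta)^2$.

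For the second identity I would use that the Riemannian volume element of the above warped product is $\dvol=\sqrt{\det g}\;d\theta\,d\omega=(\sin\theta)^{n-1}\,d\theta\,dV_{\mathbb{S}^{n-1}}=\widetilde\phi(\theta)\,d\theta\,dV_{\mathbb{S}^{n-1}}$. Restricting to $\mathcal{B}(a)=\{0\le\theta<a\}$ and applying Fubini's theorem,
\[
\int_{\mathcal{B}(a)}u(\theta)\,\dvol=\int_0^a\!\!\int_{\mathbb{S}^{n-1}}u(\theta)\,\widetilde\phi(\theta)\,dV_{\mathbb{S}^{n-1}}\,d\theta=\Bigl(\int_{\mathbb{S}^{n-1}}dV_{\mathbb{S}^{n-1}}\Bigr)\int_0^a u(\theta)\,\widetilde\phi(\theta)\,d\theta,
\]
and it only remains to recall the classical value $\int_{\mathbb{S}^{n-1}}dV_{\mathbb{S}^{n-1}}=\omega_{n-1}=(2\pi)^{n/2}/\Gamma(n/2)$.

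I do not expect any genuine obstacle here: the content is entirely elementary, and the only step requiring a little care is the verification of the warped-product form of $g$, which as indicated above reduces to a direct differentiation of the embedding map $x=(\sin\theta\,\omega,\cos\theta)$ and the orthogonality relation $\langle\omega,\partial_{\omega_j}\omega\rangle=0$ on $\mathbb{S}^{n-1}$.
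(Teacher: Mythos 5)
Your proof is correct. The paper offers no proof of this lemma at all (it is stated as one of the ``elementary facts'' to be used later), and your argument --- geodesic polar coordinates at $N$, verification of the warped-product form $g=d\theta^{2}+\sin^{2}\theta\,g_{\mathbb{S}^{n-1}}$ by differentiating the embedding $x=(\sin\theta\,\omega,\cos\theta)$, and then Fubini with the volume element $\sin^{n-1}\theta\,d\theta\,dV_{\mathbb{S}^{n-1}}$ --- is exactly the standard verification one would supply. One peripheral remark: the value of $\omega_{n-1}$ you quote from the statement, $(2\pi)^{n/2}/\Gamma(n/2)$, is a typo in the paper (correct only for $n=2$); the volume of $\mathbb{S}^{n-1}$ is $2\pi^{n/2}/\Gamma(n/2)$, though the exact value of this constant plays no role anywhere in the arguments.
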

We shall also need the following basic facts concerning spherical
rearrangements, see, e.g., \cite{BaeT,S}.
For every $a \in \lbrack 0,\pi]$, let
\begin{equation}
\label{Atheta}
A(a)=|\mathcal{B}(a )|
=\omega_{n-1}\int_{0}^{a }\widetilde{\phi}(\theta)\,d\theta.
\end{equation}
Let $\Omega\subset\mathbb{S}^{n}$ be an open set and let $u:\Omega\rightarrow \mathbb{R}$
be a measurable function. For every $t>0$, let
\begin{equation*}
\mu(t)=|\left\{x\in \Omega :\ \left\vert u(x)\right\vert >t\right\}|
\end{equation*}
denote the distribution function of $u$. Then the decreasing rearrangement $u^{\ast}$
of $u$ is defined by
\begin{equation}
u^{\ast}(s)=\inf\left\{t\geq0:\>\mu (t)\leq s\right\}
\label{u^*}
\end{equation}
for every $s\in \left[0,|\Omega|\right]$.
Let $\Omega^{\star}=\mathcal{B}(a^{\star})$, where $a^{\star}=A^{-1}(|\Omega |)$.
Then, the spherical rearrangement $u^{\star}$ of $u$ is defined by
\begin{equation*}
u^{\star}(x)=u^{\ast}(A(\theta)),\qquad x\in\Omega^{\star}.
\end{equation*}
It follows that $u^{\star}$ is a decreasing function of $\theta$, and that
its level sets are geodesic balls  (spherical caps) centered at
$N=(0,0,\ldots,1)\in\mathbb S^n$.
Since $|u|$ and $u^{\star}$ have the same distribution function, we have
\begin{equation*}
\int_{\Omega}\left\vert u\right\vert^{q}\,dV=\int_{\Omega^{\star}}
\left(u^{\star }\right)^{q}\,dV,
\end{equation*}
for all $q\geq 1$.
We shall use two standard inequalities involving rearrangements.
The following lemma is a special case of the well-known
Hardy-Littlewood inequality and may be found, e.g., in \cite{BS}, Theorem~2.2 p.~44.
\begin{lemma}[Hardy-Littlewood inequality]
\label{lem:HL}
Let $\Omega\subset\mathbb{S}^{n}$ be an open set and suppose that
$u,v:\Omega\rightarrow \mathbb{R}$ are measurable and finite a.e. Then,
\begin{equation}
\int_{\Omega }uv\,dV\leq\int_{\Omega^{\star}}u^{\star}v^{\star}\,dV.
\label{HL}
\end{equation}
\end{lemma}
The following inequality is a special case of the P\'{o}lya-Szeg\H{o} principle,
and may be found in \cite{Au}, Proposition~2.17, p.~41, see also
\cite{S}, Theorem p.~325.
\begin{lemma}[P\'{o}lya-Szeg\H{o} principle]
\label{lem:PSz}
Let $q\geq 1$ and let $u\in W^{1,q}(\mathbb S^n)$. Then,
\begin{equation}
\int_{\mathbb{S}^{n}}\left\vert \nabla u\right\vert ^{q}\,\dvol\geq
\int_{\mathbb{S}^{n}}\left\vert \nabla u^{\star }\right\vert^{q}\,\dvol.
\label{PS}
\end{equation}
\end{lemma}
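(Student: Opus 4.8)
Since Lemma~\ref{lem:PSz} is the classical P\'olya--Szeg\H o principle quoted from \cite{Au}, I only sketch the standard proof. The plan is to rewrite the left-hand side of \eqref{PS} as an integral over the level sets of $u$ by the coarea formula, and then to use the sharp isoperimetric inequality on $\Sn$ (the L\'evy--Schmidt inequality: among subsets of $\Sn$ of prescribed volume, geodesic caps have least perimeter) to show that each level-set contribution dominates the corresponding one for the radial profile $u^\star$. First I would carry out the usual reductions: by density of Lipschitz functions in $W^{1,q}(\Sn)$, and since $\bigl|\nabla|u|\bigr|=|\nabla u|$ a.e.\ and $u^\star=(|u|)^\star$, it suffices to treat a nonnegative Lipschitz $u$; one also records that $u^\star\in W^{1,q}(\Sn)$ is a nonincreasing function of $\theta$. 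Throughout, $\mu(t)=|\{x\in\Sn:u(x)>t\}|$ denotes the distribution function, shared by $u$ and $u^\star$.

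The heart of the argument is the following chain. By the coarea formula,
\[
\int_{\Sn}|\nabla u|^q\,\dvol=\int_0^{\infty}\Bigl(\int_{\{u=t\}}|\nabla u|^{q-1}\,d\mathcal H^{n-1}\Bigr)\,dt,
\qquad
-\mu'(t)=\int_{\{u=t\}}|\nabla u|^{-1}\,d\mathcal H^{n-1}
\]
for a.e.\ $t$, the critical values of $u$ (a null set, by Sard's theorem) being discarded. For $q>1$, H\"older's inequality on $\{u=t\}$ applied to $|\nabla u|^{(q-1)/q}$ and $|\nabla u|^{-(q-1)/q}$ gives
\[
P(\{u>t\})^q=\mathcal H^{n-1}(\{u=t\})^q\le\Bigl(\int_{\{u=t\}}|\nabla u|^{q-1}\Bigr)\Bigl(\int_{\{u=t\}}|\nabla u|^{-1}\Bigr)^{q-1},
\]
where $P$ denotes perimeter; hence $\int_{\{u=t\}}|\nabla u|^{q-1}\ge P(\{u>t\})^q\,(-\mu'(t))^{-(q-1)}$, and integrating in $t$,
\[
\int_{\Sn}|\nabla u|^q\,\dvol\ \ge\ \int_0^{\infty}\frac{P(\{u>t\})^q}{(-\mu'(t))^{q-1}}\,dt .
\]
The case $q=1$ needs no H\"older step, since $\int_{\Sn}|\nabla u|\,\dvol=\int_0^{\infty}P(\{u>t\})\,dt$ directly.

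It remains to bound the right-hand side from below by the same quantity with $u$ replaced by $u^\star$, and to notice that for $u^\star$ every step above is an equality. By the isoperimetric inequality on $\Sn$, $P(\{u>t\})\ge P(B_t)$, where $B_t$ is the geodesic cap with $|B_t|=\mu(t)$; since $u^\star$ has distribution function $\mu$ and its superlevel sets are geodesic caps, $\{u^\star>t\}=B_t$, so $P(\{u>t\})\ge P(\{u^\star>t\})$. Moreover $u^\star$ is radial and monotone in $\theta$, so $|\nabla u^\star|$ is constant on each geodesic sphere $\{u^\star=t\}$; H\"older then holds with equality, and running the coarea computation in reverse yields
\[
\int_{\Sn}|\nabla u^\star|^q\,\dvol=\int_0^{\infty}\frac{P(\{u^\star>t\})^q}{(-\mu'(t))^{q-1}}\,dt .
\]
Chaining the three displays gives \eqref{PS}.

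The only genuinely delicate points are measure-theoretic: justifying the two coarea identities for a.e.\ level, controlling the set of critical values, and establishing the absolute continuity of $\mu$ (so that $-\mu'$ is meaningful and the layer-cake representation of $\int_{\Sn}|\nabla u^\star|^q$ holds), together with the invocation of the sharp spherical isoperimetric inequality. All of these are classical --- see \cite{BaeT,S,Au} --- which is why this lemma is quoted here rather than reproved in detail.
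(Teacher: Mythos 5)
The paper does not prove this lemma at all: it is quoted from \cite{Au} (Proposition~2.17, p.~41) and \cite{S}, so there is no internal argument to compare with, and your decision to sketch the classical proof and defer the technical details to the literature is consistent with the paper's own treatment. Your sketch is indeed the standard coarea--isoperimetric route and its architecture is correct: coarea formula, H\"older on each level set, the sharp spherical isoperimetric inequality $P(\{u>t\})\ge P(\{u^\star>t\})$, and the reverse computation for the radial profile. One caveat is worth recording. The identity $-\mu'(t)=\int_{\{u=t\}}|\nabla u|^{-1}\,d\mathcal H^{n-1}$ is in general only the inequality $-\mu'(t)\ge\int_{\{u=t\}}|\nabla u|^{-1}\,d\mathcal H^{n-1}$: the distribution function $\mu$ can have a singular part, the set $\{\nabla u=0\}$ may meet the level sets in positive measure, and Sard's theorem is not available for Lipschitz or $W^{1,q}$ functions (a reduction to smooth $u$ requires the usual approximation plus lower semicontinuity of $v\mapsto\int|\nabla v|^q$ along $u_k^\star\to u^\star$, which you did not spell out). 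This inequality fortunately points the right way in the chain for $u$, since the exponent $1-q$ is nonpositive; the genuinely delicate step is the asserted \emph{equality} $\int_{\Sn}|\nabla u^\star|^q\,\dvol=\int_0^\infty P(\{u^\star>t\})^q(-\mu'(t))^{1-q}\,dt$, where the naive computation for $u^\star$ again only yields ``$\ge$'' and one must show that plateaus and the singular part of $\mu$ contribute nothing --- precisely the measure-theoretic point you flag at the end and defer to \cite{BaeT,S,Au}. With that proviso made explicit, your sketch is a faithful account of the classical proof of \eqref{PS}.
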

We can now prove Theorem~\ref{thm:sphere}.
\begin{proof}[Proof of Theorem~\ref{thm:sphere}]
For every $u\in W_0^{1,p}(\Omega)$, in view of Lemma~\ref{grad_radial_funct}
and Lemma~\ref{lem:PSz}, we have:
\begin{align*}
\int_{\Omega}\left\vert\nabla u\right\vert^{p}\,\dvol
\ge\int_{\Omega^{\star }}\left\vert \nabla u^{\star}
\right\vert ^{p}\,\dvol
=\cn\int_{0}^{a^{\star }}\left\vert
\frac{\partial u^{\star }}{\partial\theta}\right\vert^{p}\widetilde{\phi }(\theta)d\theta.
\end{align*}
On the other hand, in view of Lemma~\ref{lem:HL}, we have:
\begin{align*}
\int_{\Omega }\left\vert
u\right\vert^{p}\rho_{a^{\star}}^p\,\dvol
\le
\int_{\Omega^{\star }}\left\vert u^{\star}\right\vert^{p}\rho_{a^{\star}}^p\,\dvol
=\cn\int_{0}^{a^{\star}}\left\vert
u^{\star}\right\vert^{p}\rho_{a^{\star}}^{p}\widetilde{\phi}(\theta)\,d\theta.
\end{align*}
Therefore, it suffices to show that
\begin{align*}
\int_{0}^{a^\star}\left\vert \frac{\partial u^{\star }}{\partial\theta}\right\vert^{p}\widetilde{\phi}
(\theta)\,d\theta-\left(\frac{n-p}{p}\right)^{p}\int_{0}^{a^\star}
\left\vert u^{\star}\right\vert^{p}\rho_{a^\star}^{p}
\widetilde\phi(\theta)\,d\theta
\geq 0.
\end{align*}
The above inequality holds by
definition of $\rho_{a^{\star}}$, as in \eqref{rho*}, and by
Proposition~\ref{One_dim_Hardy_Tr}.
\par
In order to show that the constant $\left(\dfrac{n-p}{p}\right)^{p}$
is sharp it suffices to use, as test
functions, the sequence
$\{\widetilde{V}_{k}\}_{k \in \mathbb{N}}$
obtained by setting $\phi =\widetilde{\phi}$,
$a=a^\star$ and $T=\widetilde{T}$
in \eqref{U_tilde_k}.
Namely,
\begin{equation}
\widetilde{V}_{k}(\theta)=
\begin{cases}
\left( \int_{\frac{1}{k}}^{a^\star}
\widetilde{\phi} ^{-\frac{1}{p-1}}d\sigma \right)^{\frac{p-1}{p}}
&\mathrm{if\ }\theta\in \left[ 0,\frac{1}{k}\right)
\\
\left(\int_{t}^{a^\star}\widetilde{\phi}^{-\frac{1}{p-1}}\,d\sigma\right)^{\frac{p-1}{p}}
&\mathrm{if\ }\theta\in\left[\frac{1}{k},\widetilde{T}\right)\\
\left( \int_{\widetilde{T}}^{a^\star}\widetilde{\phi}^{-\frac{1}{p-1}}\,d\sigma \right)^{\frac{p-1}{p}}
\frac{2t-{a^\star}-\widetilde{T}}{\widetilde{T}-{a^\star}}
&\mathrm{if\ }\theta\in\left[\widetilde{T},(a^\star+\widetilde{T})/2\right)\\
0
&\mathrm{if\ }\theta\in\left[(a^\star+\widetilde{T})/2,a^\star\right]
\end{cases}.
\label{V_hat_k}
\end{equation}
Now the proof of Theorem~\ref{thm:sphere} is complete.
\end{proof}
In order to prove Theorem~\ref{thm:Fubini} we use a Steiner-type symmetrization
on $\R_+^{n+1}$ with respect to the angular variables.
See, e.g., \cite{ADLT,K} for the main results on Steiner symmetrization.
Let $u\in C_0^1(\R_+^{n+1})$.
For every fixed $r>0$
we consider the function obtained by restricting $u$
to $\mathbb{S}^n\cap\R_+^{n+1}$.
Namely, we consider the function
\begin{equation}
\label{restr}
\Theta\in\mathbb{S}^n\cap\R_+^{n+1}\rightarrow u(r,\Theta),
\end{equation}
where $\Theta=(\theta_1,...,\theta_{n-1}, \theta)$ is the set of all angular variables.
We denote by  $u^*(r,\cdot)$ the decreasing rearrangement of the function
in \eqref{restr}, according to the definition given in \eqref{u^*}.
Finally  we introduce the Steiner rearrangement $u^\sharp$ of $u$ as follows:
\begin{equation} \label{u^sharp}
u^\sharp(r,\theta)=u^*(r,A(\theta)), \quad \theta \in [0,\pi/2],
\end{equation}
where $A(\theta)$ is defined in \eqref{Atheta}.
We denote by $g_{r}$ the standard metric on $\mathbb{S}_{r}^{n}$
and by $dV_r$ the volume element on $\mathbb{S}_{r}^{n}$.
Then, we have $D_\Theta u=\nabla_{g_r}u$
and therefore, in view of Lemma~\ref{grad_radial_funct}
and a rescaling argument,
\begin{equation}
\label{|Du^Star|}
|D_\Theta u^\sharp(r,\theta)|^p
=\frac{1}{r^p}\left|\frac{\partial u^\sharp}{\partial\theta}\right|^p.
\end{equation}
We claim
that:
\begin{equation}
\frac{\zeta ^{p}(x)}{|x|^{p}}\in L^{1}(\mathbb{R}_{+}^{n+1}\cap B_{R})
\label{zetaL1}
\end{equation}
for every $R>0$, where $\zeta$ is the weight function appearing in the
statement of Theorem~\ref{thm:Fubini}. Indeed, writing $x=(x^{\prime },x_{n+1})$, in view of
\eqref{rho_asym}, we have for some $C>0$:
\begin{equation*}
\frac{\zeta (x)}{|x|}\leq \frac{C}{\theta r}\leq \frac{C}{r\sin\theta}=\frac{C}{|x^{\prime }|}.
\end{equation*}
Consequently, for any $R>0$ we have:
\begin{equation*}
\int_{|x^{\prime }|,|x_{n+1}|<R}\frac{\zeta ^{p}(x)}{|x|^{p}}
\,dx=\int_{0}^{R}\,dx_{n+1}\int_{|x^{\prime }|<R}\frac{\zeta ^{p}(x)}{|x|^{p}}
\,dx^{\prime }.
\end{equation*}
Now \eqref{zetaL1} follows in view of the assumption $p<n$.
\begin{proof}[Proof of Theorem~\ref{thm:Fubini}]
By density, it suffices to consider $u\in C_{0}^{1}\left( \mathbb{R}_{+}^{n+1}\right)$.
By rescaling, if $\Omega\subset\mathbb{S}_{r}^{n}$ and $u:\Omega \rightarrow \mathbb{R}$
depends on $\theta$ only, then rescaling \eqref{mainformulae} we obtain
\begin{align}
\label{mainrformulae}
&|\nabla _{g_{r}}u|^{2}=\frac{1}{r^2}\left(\frac{\partial u}{\partial\theta}\right)^{2}
&&\int_{\mathcal{B}_{r}(\alpha)}u(\theta )\,dV_{r}=\omega_{n-1}r^{n}
\int_{0}^\alpha u(\theta )\widetilde{\phi}(\theta)\,d\theta,
\end{align}
By Fubini's Theorem and in view of Lemma~\ref{lem:PSz},
we have:
\begin{align}
\int_{\mathbb{R}_{+}^{n+1}}\left\vert D_{\Theta}u\right\vert ^{p}\,dx
\label{R_+_1}
=\int_0^{+\infty}\int_{\mathbb S_r^n}|\nabla_{g_r}u|^p\,d\sigma_r
\ge\int_0^{+\infty}\int_{\mathbb S_r^n}|\nabla_{g_r}u^\sharp|^p\,d\sigma_r
\end{align}
where $u^{\sharp }=u^{\sharp}(r,\theta)$ is defined in \eqref{u^sharp}.
Consequently, from \eqref{|Du^Star|}, \eqref{R_+_1} and
in view of Theorem \ref{thm:sphere}
with $a^{\star }=\pi/2$, we derive
\begin{align*}
\int_{\mathbb{R}_{+}^{n+1}}\left\vert D_{\Theta }u\right\vert ^{p}\,dx
\geq &\cn\int_{0}^{+\infty }\left( \int_{0}^{\frac{\pi }{2}}
\frac{1}{r^{p}}\left( \frac{\partial u^{\star }}{\partial \theta}
\right)^{p}\sin ^{n-1}\theta\,d\theta\right) r^{n}\,dr \\
\geq &\cn\left(\frac{n-p}{p}\right) ^{p}\int_{0}^{+\infty
}\left( \frac{1}{r^{p}}\int_{0}^{\frac{\pi }{2}}\left\vert u^{\star
}\right\vert ^{p}\zeta ^{p}\sin ^{n-1}\theta\,d\theta\right)
r^{n}\,dr \\
\geq &\left( \frac{n-p}{p}\right) ^{p}\int_{\mathbb{R}_{+}^{n+1}}\frac{
\left\vert u\right\vert ^{p}}{r^{p}}\zeta ^{p}\,dx.
\end{align*}
We are left to prove sharpness. To this end, we consider the sequence
$u_{k}(\theta,r)=\Theta_{k}(\theta)R_{k}(r),\text{ }k\in \mathbb{N}$,
where $R_{k}\in C_{0}(0,+\infty )$ satisfies
$R_k>0$ and $R_{k}^{p}(r)\overset{\ast}{\rightharpoonup}\delta_{1}(r)$,
weakly in the sense of measures. Here $\delta_1$ denotes the Dirac mass
on $(0,+\infty)$ centered at $r=1$, and
$\Theta_{k}(\theta)=\widetilde{V}_{k}(\theta)$, where $\widetilde V_k$ is
the sequence defined in \eqref{V_hat_k}, with
$a^{\star}= \pi /2$.
We have
\begin{equation*}
\lim_{k\rightarrow +\infty }\int_{0}^{+\infty
}R_{k}^{p}(r)r^{n}\,dr=\lim_{k\rightarrow +\infty }\int_{0}^{+\infty
}R_{k}^{p}(r)r^{n-p}\,dr=1.
\end{equation*}
Now, the claim follows since
\begin{align*}
\frac{\int_{\mathbb{R}_{+}^{n+1}}\left\vert D_{\Theta }u_{k}\right\vert^{p}\,dx}
{\int_{\mathbb{R}_{+}^{n+1}}\left\vert u_{k}\right\vert ^{p}\dfrac{\zeta ^{p}}{r^{p}}\,dx}
=&\frac{\int_{0}^{+\infty }R_{k}^{p}(r)r^{n}\,dr}{\int_{0}^{+\infty }R_{k}^{p}(r)r^{n-p}\,dr}
\frac{\int_{\mathbb{S}^{n}_{r}\cap\mathbb{R}_{+}^{n+1}}
\left\vert\Theta_{k}^{\prime }(\theta)\right\vert^{p}\,dv_{g_{r}}}
{\int_{\mathbb{S}^{n}_{r}\cap\mathbb{R}_{+}^{n+1}}\left\vert\Theta_{k}(\theta)\right\vert^{p}\zeta^{p}\,dv_{g_{r}}}\\
=&\frac{\int_{0}^{\pi/2}\left\vert\Theta_{k}^{\prime}(\theta)\right\vert^{p}\widetilde{\phi}\,d\theta}
{\int_{0}^{\pi/2}\left\vert \Theta _{k}(\theta)\right\vert ^{p}\zeta ^{p}\widetilde{\phi}\,d\theta}+o(1)
=\left(\frac{n-p}{p}\right)^{p}+o(1),
\end{align*}
where $o(1)$ vanishes as $k\to\infty$.
\end{proof}

\end{document}